\documentclass[a4paper,11pt]{article}
\usepackage[utf8]{inputenc}
\usepackage{amsmath, amsfonts, amsthm,amssymb,  bbm}
\usepackage{math}
\usepackage{hyperref, geometry} 
\usepackage[usenames, dvipsnames]{color}
\usepackage{xcolor}
\newtheorem{lemma}{Lemma}[section]
\newtheorem{theorem}[lemma]{Theorem}
\newtheorem{proposition}[lemma]{Proposition}
\newtheorem{corollary}[lemma]{Corollary}
\newtheorem{remark}[lemma]{Remark}

\newtheorem{definition}[lemma]{Definition}

 \usepackage{mathrsfs}

\def\beq{\begin{equation}}   \def\eeq{\end{equation}}
\def\bea{\begin{eqnarray}}  \def\eea{\end{eqnarray}}


\newcommand{\dd}{  \text{d}   }

\newcommand{\eps}{\varepsilon}

\def\norma#1{\left\|#1\right\|}

\def\csi{\xi}
\def\heta{\eta}

\newcommand{\om}{{\omega}}

\newcommand{\p}{  \partial   }
\renewcommand{\s}{  \sigma   }
\newcommand{\ka}{  \kappa   }
\newcommand{\meas}{\operatorname{meas}}
\renewcommand{\be}{\begin{equation}}
\newcommand{\ee}{\end{equation}}


\numberwithin{equation}{section}

\renewcommand{\bar}{\overline}

\title{Longtime dynamics for the Landau Hamiltonian  with  a time dependent magnetic field }

\author{
D. Bambusi\footnote{Dipartimento di Matematica Federigo Enriques, Universit\`a degli Studi di Milano, Via Saldini 50, I-20133
Milano. \newline
 \textit{Email: } \texttt{dario.bambusi@unimi.it}},
B. Gr\'ebert\footnote{Laboratoire de Math\'ematiques Jean Leray, Universit\'e de Nantes, 2 rue de la Houssini\`ere
BP 92208, 44322 Nantes Cedex 3 \newline
 \textit{Email: } \texttt{benoit.grebert@univ-nantes.fr}} ,
A. Maspero\footnote{ International School for Advanced Studies (SISSA), Via Bonomea 265, 34136, Trieste, Italy \newline
 \textit{Email: } \texttt{amaspero@sissa.it}}, 
  D. Robert\footnote{Laboratoire de Math\'ematiques Jean Leray, Universit\'e de Nantes, 2 rue de la Houssini\`ere
BP 92208, 44322 Nantes Cedex 3 \newline
 \textit{Email: } \texttt{didier.robert@univ-nantes.fr}},
 C. Villegas-Blas\footnote{Universidad Nacional Autonoma de M\'exico, Instituto de Matem\'aticas, Unidad Cuernavaca \newline
 \textit{Email: } \texttt{villegas@matcuer.unam.mx}}
}

\date{}

\begin{document}

	\maketitle

\begin{abstract}
We consider a modulated magnetic field, $B(t) = B_0 +\eps f(\omega t)$,  perpendicular to a fixed plane, where $B_0$ is constant, $\varepsilon>0$  and $f$ a periodic function on the torus ${\mathbb T}^n$.
 Our aim is to study  classical and  quantum dynamics for the corresponding Landau Hamiltonian. It turns out that the  results depend strongly on the chosen  gauge. For the Landau gauge the position  observable 
 is unbounded for "almost all" non resonant frequencies $\omega$. On the contrary, for the symmetric gauge we obtain that, for "almost all" non resonant frequencies $\omega$, the Landau Hamiltonian is reducible to a two dimensional harmonic oscillator and thus gives rise to bounded dynamics. The proofs use KAM algorithms  for the classical dynamics. Quantum applications are given. In particular, the Floquet spectrum is absolutely continuous in the Landau gauge while it is discrete, of finite multiplicity, in symmetric gauge.
\end{abstract}
	
\begin{flushright}
\textit{
Thank you, Thomas, for sharing your enthusiasm \\
and your joy of playing with mathematics.}
 \end{flushright}

\section{Introduction and main results} 

In this paper we study the dynamics of time dependent perturbations of the Schr\"odinger  equation
\begin{equation}
\label{LS}
\im\partial_t\psi = H_{A^\#}(t) \psi + V(t) \psi \ , 
\end{equation}
where  
$H_{A^\#}(t)$ is the magnetic Schr\"odinger operator in $L^2(\R^3)$:
\begin{equation}
\label{landau1}
H_{A^\#}(t) := \sum_{1\leq j\leq 3}\left(D_{x_j}-A^{\#}_j(t,x)\right)^2,\;\; D_{x} := \im^{-1}\frac{\partial}{\partial x} \ , 
\end{equation}
with $A^{\#}(t,x) =(A^{\#}_1(t,x), A^{\#}_2(t,x),A^{\#}_3(t,x))$  a time dependent {vector} potential 
and finally 
 $V(t,x)$ is a time dependent {scalar} potential.
 We recall that the 
  electric field is given by
   $\vec E(t, x)=-\frac{\partial A^{\#}}{\partial t}(t,x)-\nabla_x V(t,x)$ and the magnetic field  by 
$\vec B(t,x) = \nabla_x\wedge A^{\#}(t, x)$. We shall assume that the magnetic field has a fixed direction orthogonal to the plane $\{e_1, e_2\}$.
Choosing $A^{\#}_3(t,x)\equiv 0$ then $A_1^{\#}(t, x)$ and  $A_2^{\#}(t, x)$ depend only on $(t,x_1,x_2)$ and it is enough to consider the 2D  magnetic Hamiltonian, {with a new simpler notation}, 
 $$
 H_{A^{\#}}(t) = \left(D_{x_1}-A_1^{\#}(t,x)\right)^2 + \left(D_{x_2}-A_2^{\#}(t,x)\right)^2
 $$ as an operator in  $L^2(\R^2)$. \\ 
 An important particular case is  the  constant (in position)  magnetic  field ${\mathbf B}(t) = (0,0, -B(t))$, for which  
 \begin{equation}\label{AB} B(t) = \partial_{x_1} A_2^{\#} - \partial_{x_2}
   A_1^{\#}   \ . \end{equation}
{This is usually studied using either the symmetric Gauge or the
  Landau Gauge, namely}
 \begin{itemize}
 \item[-]{\bf the symmetric gauge}:  $A_1^{\#}(t,x) = \frac{B(t)}{2}x_2,  \ \  A_2^{\#}(t,x) = -\frac{B(t)}{2}x_1$;
 \item[-] {\bf the Landau gauge:} $A_1^{\#}(t,x) = B(t)x_2,  \ \  A_2^{\#}(t,x) = 0$.
 \end{itemize}
 In this paper we consider the case when $B$ slightly fluctuates
 around a fix value $B_0>0$: $B(t) = B_0 +\eps f(\omega t)$ where
 $\omega\in\R^n$ is a frequency vector, $f$ is a periodic function
 real analytic on the torus ${\mathbb T}^n$ and $\varepsilon>0$ is a
 small parameter.

 Mathematically the source of most of the interesting features of
   the Landau Hamiltonian rests in the fact that when $\eps=0$ the
   Hamiltonian is degenerate, in the sense that it is equivalent
   (unitary equivalent in the quantum case, canonically equivalent in
   the classical case) to the Hamiltonian of a 1-d harmonic
   oscillator. As a result the quantum spectrum of the system is
   composed just by essential spectrum and coincides with the set
   $\left\{\lambda_j=2B_0(j+1/2)\mid {j\in\N}\right\}$.

 The case $\eps\not=0$  will be discussed in the two  different gauges: \\
 (i) the Landau gauge
$H_L(t) = (D_{x_1}-B(t)x_2)^2 +D_{x_2}^2$; \\
(ii) the symmetric gauge $H_{sL}(t) = (D_{x_1}-\frac{B(t)x_2}{2})^2
 +(D_{x_2}+\frac{B(t)x_1}{2})^2$.
\\
{Notice that, for $\varepsilon =0$,  $B(t)=B_0$ hence $H_L$ and $H_{sL}$ are gauge equivalent,  but for $\epsilon\neq 0$ this equivalence is broken.}

 It turns out that both the main part of the
  Hamiltonian and the time dependent perturbation are 
  quadratic polynomials in the position and the momentum variables,
  and this allows to study the problems (both classical and quantum)
  using the ideas of \cite{BGMR1}, namely by using classical KAM
  theory to conjugate the Hamiltonian to a suitable normal form whose
  dynamics is easy to study. \\
  The results depend drastically of the choice of the gauge:
\begin{itemize}
\item In  case (i), provided $\omega$ is non resonant, a condition
  which is fulfilled in a set of asymptotically full measure, we get
  that for $\eps\not=0$ the position observable is unbounded as
  $t\to\infty$ as well for the classical motion and the quantum
  motion. {\em It may be surprising that for a dynamical system a
    non-resonance condition generates an instability}\footnote{But the
      phenomenon is similar of that encoded in Theorem 3.3 of
      \cite{BGMR1}, in which the non resonance condition is used to
      eliminate from the Hamiltonian as many terms as possible, so
      that one remains only with the terms actually generating the
      instability.}.\\
      As a consequence, in the quantum side, we prove that the Floquet spectrum is absolutely continuous.
\item
In case (ii) we prove that for $\omega$ in a set of
  asymptotically full measure,  the dynamics is reducible to a
  harmonic oscillator with two degrees of freedom, hence with bounded
  dynamics. \\
  As a consequence, in the quantum side, we prove that the Floquet spectrum is discrete with finite multiplicity.
  \end{itemize}

Notice that $H_L(t)$ and $H_{sL}(t)$ are gauge equivalent modulo a
quadratic scalar potential (see section \ref{changegauge}). Therefore
the two models are not physically equivalent: in the two cases we have
the same magnetic field but not the same electric field.\\

\vskip20pt

\subsection{Main result in the Landau gauge}\label{landgauge}



We consider first the  Landau gauge, namely
\begin{equation}
\label{landauB}
H_L(t) = \left(D_{x_1} - B(t)x_2\right)^2   + D_{x_2}^2 \ , \qquad B(t) >0 \ , 
\end{equation}
with  $B(t):=B_0+\eps f(\omega t)$, $f$ is real analytic on the torus
${\mathbb T}^n$, $\hat f(0)=0$
and $\omega\in [0,2\pi)^n:=\D$. Here and below we denote by $\hat f(k)$ the $k$-th Fourier coefficients of $f$, 
    $$
\hat f(k):=(2\pi)^{-n}\int_{\T^n}f(\theta)e^{-\im k\cdot \theta}d\theta\ .
    $$
We decompose the Hamiltonian $H_L(t)$ in \eqref{landauB} as 
$$H_L(t)= H_L+ R_L(\omega t)$$
where 
\begin{align*}
H_L &= \left(D_{x_1} - B_0 x_2\right)^2   + D_{x_2}^2,\\
R_L(\omega t)&= -2\eps f(\omega t) x_2 (D_{x_1}-B_0 x_2)+\eps^2f(\omega t)^2 x_2^2.
\end{align*}
We denote by $h_L(t)$ the corresponding classical Hamiltonian:
\begin{equation}
\label{landaub}h_L(t,x,p)= (p_1-B(t)x_2)^2+p_2^2=h_L(x,p)+r_L(\omega t,x,p).\end{equation}
We introduce now complex coordinates in which  the classical Hamiltonian $h_L$ has the form of a degenerate 2-dimensional Harmonic oscillator. 
First introduce 
 the symplectic variables
$$ Q_1=\frac{-1}{B_0}(p_1-B_0x_2),\quad P_1=p_2$$
and
$$Q_2=\frac{-1}{B_0}(p_2-B_0x_1),\quad P_2=p_1.$$
In these variables we have
$$h_L=B_0^2Q_1^2+P_1^2.$$
Then we introduce the complex variables
$$z_1=\frac{B_0Q_1+\im P_1}{\sqrt{2B_0}}, \quad z_2=\frac{B_0Q_2+\im P_2} {\sqrt{2B_0}} \ , 
$$
fulfilling  $\im dz_i\wedge d\bar z_i= dQ_i\wedge dP_i$, $i=1,2$. 
In these variables
\begin{equation}\label{hL}
h_L=2 B_0 |z_1|^2.
\end{equation}
 The link with the initial coordinates $(x,p)\in\R^4$ is given by $(z_1,z_2) = \tau_0(x,p)$ where $\tau_0$ is the  linear symplectic   transformation 
$\tau_0: \R^4\rightarrow\C^2$ such that 
\bea\label{coord1}
z_1 &= &\frac{B_0}{\sqrt{2B_0}}\left(\frac{B_0x_2 -p_1}{B_0}\right) + \im \frac{p_2}{\sqrt{2B_0}} \nonumber\\
z_2 &= &\frac{B_0}{\sqrt{2B_0}}\left(\frac{B_0x_1 -p_2}{B_0}\right) + \im \frac{p_1}{\sqrt{2B_0}}.
\eea

In order to state the first result we need to define a constant
  $c_\omega$ which is only defined for $\omega$ fulfilling a
  nonresonance condition. To this end we preliminary restrict the set
  of the allowed frequencies.
  \begin{definition}
    \label{diof}
The set $\D_0\subset[0,2\pi]^n$ is the set of the frequencies $\omega$
s.t. there exist positive $\gamma$, $\tau$ s.t.
\begin{align}
   \label{diof.1}
\left|\omega\cdot k+2B_0\right|\geq\frac{\gamma}{1+|k|^\tau}\ ,\quad
\forall k\in\Z^n
\\
\label{diof.2}
\left|\omega\cdot k\right|\geq\frac{\gamma}{|k|^\tau}\ ,\quad
\forall k\in\Z^n\setminus\left\{ 0\right\}\ .
\end{align}
\end{definition}
Remark that such a set has full measure in $[0,2\pi]^n$.

We now introduce 
\be\label{c}
c_\omega:= \int_{\T^n} {g_\omega(\theta)^2 } d\theta
\ee
where the function 
$$
g_\omega(\theta) := -\frac1{\sqrt{2B_0}}\sum_{k\neq 0}\frac{\omega\cdot
  k}{\omega\cdot k+2 B_0}\hat f(k) e^{\im k\cdot \theta},
  $$
is  well defined for $\omega\in\D_0$ (recall that $f$ is real analytic).

Furthermore, in the following we will say that two polynomials are
$O(\epsilon)$ close from each other, if their coefficients are
$O(\epsilon)$ close from each other.
 
Our first result is the following:
\begin{theorem}
  \label{len.mag} There exists $\eps_0>0$ such that for $|\eps|<\eps_0$, 
   there exist
   \begin{itemize}
   \item an asymptotically full measure set of frequencies $\mathcal C_\eps\subset \D_0$ satisfying:
   $$\lim_{\eps\to 0}{\text{meas}(\D_0\setminus\mathcal C_\eps)}=0;$$
   \item  a linear, symplectic change of variable $\tau$,   depending on $\omega, t,\eps$,  which is close to the identity, namely  $\tau = \mathbb{I} + O(\eps)$
   uniform in the other parameters,
   \end{itemize}
    such that, for $\omega\in\mathcal C_\eps$, the time quasi-periodic Hamiltonian $h_L(t,x,p)$ in \eqref{landaub} is conjugated to the constant coefficient quadratic Hamiltonian
    \begin{equation}
      \label{bc.1}
(h_L(t)\circ \tau_0^{-1})\circ \tau =b(\eps)|z_1|^2+c(\eps)(z_2-\bar z_2)^2
      \end{equation}
where 
\be\label{bc}b(\eps)= 2B_0+O(\eps^2),\quad c(\eps)= c_\omega\eps^2+O(\eps^4)\ee
and $ c_\omega$ is given by \eqref{c}.
\end{theorem}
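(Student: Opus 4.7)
The plan is to reduce the quadratic, time-quasi-periodic Hamiltonian $h_L(t,x,p)$ to the announced normal form by a linear KAM iteration in the spirit of \cite{BGMR1}. First I would pass to the complex coordinates via $\tau_0$, rewriting the problem as
$$
H_0(\theta, z, \bar z) = 2 B_0 |z_1|^2 + \tilde r(\theta, z, \bar z), \qquad \theta = \omega t,
$$
where $\tilde r = r_L \circ \tau_0^{-1}$ is an $O(\eps)$, analytic-in-$\theta$ quadratic form in $(z, \bar z)$ containing the quadratic monomials $z_1^a \bar z_1^b z_2^c \bar z_2^d$ with $a+b+c+d=2$. A crucial structural property, inherited from the fact that $H_L(t)$ does not depend on $x_1$, is that $\tilde r$ depends on $z_2, \bar z_2$ only through the combination $z_2 - \bar z_2$. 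Since the Hamiltonian is purely quadratic, the whole iteration lives in the finite-dimensional space of quadratic forms on $\C^2$, which drastically simplifies the analytic content of the scheme.

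One step of the KAM scheme amounts to solving the homological equation $\omega \cdot \partial_\theta S + \{2 B_0 |z_1|^2, S\} = \tilde r - \langle \tilde r \rangle_{\mathrm{res}}$ by Fourier--monomial expansion: the Lie action of $h_L$ on $z_1^a \bar z_1^b z_2^c \bar z_2^d$ has eigenvalue $2 i B_0 (a-b)$, so each coefficient of $S$ is obtained by dividing by $i[\omega \cdot k + 2 B_0 (a-b)]$. For quadratic monomials one has $a - b \in \{0, \pm 1, \pm 2\}$, and the Diophantine conditions \eqref{diof.1}--\eqref{diof.2}, supplemented by an analogous lower bound on $|\omega \cdot k + 4 B_0|$ that I would include in the definition of $\mathcal C_\eps$, give lower bounds on the denominators, yielding an analytic $S$ and a linear symplectic conjugation $e^{\operatorname{ad}_S} = \mathbb{I} + O(\eps)$. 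The resonant, time-independent remainder $\langle \tilde r \rangle_{\mathrm{res}}$ lies in $\mathrm{span}\{|z_1|^2,\, z_2^2,\, |z_2|^2,\, \bar z_2^2\}$; and since the $\Re z_2$-translation invariance of $\tilde r$ is preserved by Poisson bracket with any $\Re z_2$-invariant generator, the iteration preserves this invariance at every step, forcing the $z_2$-part of the normal form to be a scalar multiple of $(z_2 - \bar z_2)^2$, exactly the structure claimed in \eqref{bc.1}.

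I would then iterate Newton-style, the $N$-th step producing a perturbation of size $O(\eps^{2^N})$ and requiring a further restriction of $\omega$ by Diophantine inequalities on $\omega \cdot k + 2 B_0 j$, $j \in \{0, \pm 1, \pm 2\}$, with constants $\gamma_N$ decreasing to $0$. Standard measure-excision estimates then yield $\meas(\D_0 \setminus \mathcal C_\eps) \to 0$ as $\eps \to 0$; the main technical obstacle is the usual KAM balance between loss of analyticity at each step and the shrinking of admissible Diophantine constants, which however becomes essentially routine bookkeeping in the finite-dimensional quadratic category. It remains to read off the coefficients: the first-order resonant average vanishes because $\hat f(0) = 0$, so the leading contribution to $c(\eps)$ comes from the second-order Lie correction $\tfrac12 \langle \{S_1, \tilde r\} \rangle_{\mathrm{res}}$ where $S_1$ is the first-step generator. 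Explicitly tracking the $(z_1 + \bar z_1)(z_2 - \bar z_2)$ monomials in $\tilde r$ together with the denominators $\omega \cdot k + 2 B_0$ appearing in the corresponding piece of $S_1$ reproduces the $(z_2 - \bar z_2)^2$ coefficient $\eps^2 \int_{\T^n} g_\omega(\theta)^2\, d\theta = c_\omega \eps^2$ of \eqref{c}; the computation $b(\eps) = 2 B_0 + O(\eps^2)$ is analogous and simpler.
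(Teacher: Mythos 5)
Your proposal is correct and follows essentially the same route as the paper: work in complex coordinates, observe that the $x_1$-translation invariance makes $r_L$ (and hence every generator produced by the scheme) depend on $z_2,\bar z_2$ only through $z_2-\bar z_2$, set up a quadratic KAM/Newton iteration whose homological equations have small divisors $\omega\cdot k + 2B_0(a-b)$ with $a-b\in\{0,\pm1,\pm2\}$, excise the resonant $\omega$'s, and read off $c_\omega$ from the first two Lie corrections (the first-order average vanishing since $\hat f(0)=0$). The paper organizes this slightly differently — it performs the first two KAM steps explicitly to identify $c_\omega\eps^2$ and $O(\eps^2)$ correction to $b$, and then invokes a standalone reducibility theorem (its Theorem \ref{KAMclassico}) whose abstract conclusion only gives $|c(\eps)|\le\eps$; the precise asymptotics in \eqref{bc} come from the explicit steps, exactly as you note. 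Two small remarks on bookkeeping: the paper's iteration uses $\eps_m=\eps^{(3/2)^m}$ rather than $\eps^{2^m}$ because of analyticity loss and small-divisor accumulation, but both yield convergence; and when extracting $c_\omega$ you should also account for the direct contribution of the $O(\eps^2)$ piece $r_2$ of the original perturbation (not only $\tfrac12\{S_1,r_1\}$), which the paper's explicit computation does.
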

In the new  coordinates $(z_1, z_2)$ the motion is easily computed:
\bea
z_1(t) &=& {\rm e}^{-\im b(\eps)t}z_1(0) \ , \nonumber\\
\Im z_2(t)  &=&\Im z_2(0),\;\; \Re z_2(t) = -4c(\eps)\Im z_2(0)t  + \Re z_2(0) \ .\nonumber
\eea
   Let us come back to the original coordinates  $(x,p)$.  First
 we remark that the Hamiltonian at r.h.s. of \eqref{bc.1} takes the
 form
 $$
\frac{b(\eps)}{2B_0}h_L(x,p)+\alpha c(\varepsilon)p_1^2
 $$
with $h_L$ the original Landau Hamiltonian and $\alpha\not=0$ a
numerical constant. As a consequence, the corresponding dynamics is
just given by the standard circular motion of a particle in a magnetic
field with a slightly
different frequency and with superimposed a uniform motion in the
direction of $x_1$. This uniform motion is the new effect which gives
rise to the growth of the solution. Actually this description holds in
 the system of coordinates introduced by the KAM procedure. In the
 true original coordinates this motion is slightly deformed, so that
 it has superimposed a small oscillation. Precisely
  the motion for the quadratic Hamiltonian $h_L(t)$  is a linear flow
 $\Phi_B(t)$   where we have 
 $$
 \begin{pmatrix} x(t)\\p(t)\end{pmatrix} = \Phi_B(t) \begin{pmatrix}x(0)\\p(0)\end{pmatrix}.
 $$
where $\Phi_B(t)$ is a real $4\times 4$ symplectic matrix 
{(it is the classical Hamiltonian flow of the classical Hamiltonian $h_L(t)$)}.
Then we have:

\begin{corollary}\label{class_esc}
For $\omega\in\mathcal C_\eps$  we have, with $\alpha\neq 0$, 
\beq\label{1nb}
x_1(t)=x_1(0) +\alpha c(\eps)p_1(0) t +\frac{1}{B(t)}(p_2(t)-p_2(0)) + a_{\varepsilon,\omega}(t)\cdot x(0) + b_{\varepsilon,\omega}(t)\cdot p(0)
\eeq
{where $\vert a_{\varepsilon,\omega}(t)\vert + \vert b_{\varepsilon,\omega}(t)\vert = O(1)$ for $0<\varepsilon <1$ and $t\in\R$}.\\

Moreover,  modulo an error term of the form $E_{\tilde a,\tilde b}(t) = \tilde a(t)\cdot x(0) + \tilde b(t) \cdot p(0) $,
 such that  uniformly for $t\in\R$, $\omega\in\mathcal C_\eps$, we have $\vert\tilde a(t)\vert  +\vert\tilde b(t)\vert =O(\eps)$,

 \bea
p_2(t) &=& \sqrt{2B_0}\Im(z_1(t)),  \;\;z_1(t) = {\rm e}^{-2ibt}z_1(0) \\\nonumber
p_1(t)  &=&p_1(0) \nonumber\\
x_2(t) &=&  \frac{p_1(0)}{B(t)} + \frac{\sqrt{2}B_0^{3/2}}{B(t)^2}\Re z_1(t).
\eea
\end{corollary}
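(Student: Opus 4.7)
I would prove the corollary by unfolding the chain of transformations from Theorem~\ref{len.mag} back to the original phase space. First, inverting the explicit linear symplectic map $\tau_0$ of \eqref{coord1} yields
\[
p_1=\sqrt{2B_0}\,\Im z_2,\quad p_2=\sqrt{2B_0}\,\Im z_1,\quad x_1=\frac{p_2}{B_0}+\sqrt{\tfrac{2}{B_0}}\,\Re z_2,\quad x_2=\frac{p_1}{B_0}+\sqrt{\tfrac{2}{B_0}}\,\Re z_1.
\]
Denoting by $(Z_1,Z_2)$ the normal-form coordinates so that $(z_1,z_2)=\tau(\omega t)(Z_1,Z_2)$ with $\tau=\mathbb{I}+O(\eps)$, Theorem~\ref{len.mag} gives the explicit dynamics $Z_1(t)=\e^{-\im b(\eps)t}Z_1(0)$, $\Im Z_2(t)=\Im Z_2(0)$, $\Re Z_2(t)=\Re Z_2(0)-4c(\eps)\,t\,\Im Z_2(0)$, together with the identification $Z_i(0)=z_i(0)+O(\eps)(z(0),\bar z(0))$.

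The exact formula for $x_1(t)$ would then be derived as follows. Substituting the above into $x_1=p_2/B_0+\sqrt{2/B_0}\,\Re z_2$ and using $\Im z_2(0)=p_1(0)/\sqrt{2B_0}$, the only secular contribution to $\Re z_2(t)$ is $-4c(\eps)t\,\Im Z_2(0)$, which after multiplication by $\sqrt{2/B_0}$ produces the term $-\frac{4c(\eps)}{B_0}p_1(0)\,t$; this identifies $\alpha=-4/B_0\neq 0$. The remaining constant and oscillatory pieces combine to $x_1(0)+(p_2(t)-p_2(0))/B_0$, and the identity $1/B_0=1/B(t)+\eps f(\omega t)/(B_0 B(t))$ converts this into $(p_2(t)-p_2(0))/B(t)$ up to an $O(\eps)$ linear correction in $(x(0),p(0))$; together with all other $O(\eps)$ residuals, it is absorbed into $a_{\eps,\omega}(t)\cdot x(0)+b_{\eps,\omega}(t)\cdot p(0)$, yielding \eqref{1nb}.

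For the three approximate identities I would argue as follows. Since $\partial_{x_1}h_L(t)=0$, the momentum $p_1$ is exactly conserved, giving $p_1(t)=p_1(0)$ with zero error. Next, $p_2(t)=\sqrt{2B_0}\,\Im z_1(t)$; since $\tau=\mathbb{I}+O(\eps)$ and $Z_1(t)$ is bounded in time, this equals $\sqrt{2B_0}\,\Im(\e^{-\im b(\eps)t}z_1(0))$ modulo an error of the form $E_{\tilde a,\tilde b}(t)$. For $x_2$, starting from $x_2=p_1/B_0+\sqrt{2/B_0}\,\Re z_1$, the expansions $1/B_0=B_0/B(t)^2+O(\eps)$ and $\sqrt{2/B_0}=\sqrt{2}B_0^{3/2}/B(t)^2+O(\eps)$ (valid because $B(t)=B_0+O(\eps)$) yield the stated expression up to a further $E_{\tilde a,\tilde b}$ contribution.

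The main obstacle is to ensure that the error terms $E_{\tilde a,\tilde b}$ are bounded uniformly for $t\in\R$, and not merely on a finite time interval. Because $\Re Z_2(t)$ grows linearly in $t$, any $O(\eps)$ mixing of $Z_2$ into $z_1$ or into $\Im z_2$ introduced by $\tau$ would a priori create a drift of size $O(\eps)\cdot c(\eps)t=O(\eps^3)t$ in the components meant to be bounded. One therefore needs to exploit the block structure of the transformation $\tau$ produced by the KAM procedure of Theorem~\ref{len.mag}, together with the exact conservation $p_1(t)=p_1(0)$ and the smallness $c(\eps)=O(\eps^2)$, to show that such mixings are absent at the leading order; this is what allows the secular direction to appear only in $x_1$, while the remaining observables oscillate boundedly up to $O(\eps)$.
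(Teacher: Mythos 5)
The approach you take --- inverting $\tau_0$, plugging in the explicit normal-form dynamics, and tracking which pieces are secular versus bounded --- is the natural one, and your algebra is correct. In particular your inversion of \eqref{coord1} is right, your identification of the secular term in $x_1$ is right (giving $\alpha=-4/B_0$), and you correctly observe that $p_1$ is \emph{exactly} conserved because $h_L(t)$ does not depend on $x_1$, which is stronger than what the statement requires.

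You also correctly flag the one place where this is not a mere computation: since $\Re Z_2(t)$ grows like $t$, any leakage of that coordinate into $z_1$ or into $\Im z_2$ through the conjugacy $\tau$ would produce an unbounded error in the formulas for $p_2(t)$, $p_1(t)$, $x_2(t)$, which must instead have errors that are $O(\eps)$ \emph{uniformly in $t$}. But here your phrasing --- that one must show such mixings are ``absent at the leading order'' --- is too weak: a nonzero mixing at any order would still give an error growing in $t$, just at a higher power of $\eps$. What you actually need, and what the KAM scheme of Theorem~\ref{KAMclassico} is designed to deliver, is that the mixing is \emph{exactly} zero. The mechanism is the following. In the variables $(\csi_1,\heta_1,\csi_2=z_2-\bar z_2,\heta_2)$ of \eqref{ambio}, every generator $\chi_m$ produced by the iteration is a quadratic polynomial in $(\csi_1,\heta_1,\csi_2)$ only, i.e.\ independent of $\heta_2$; this class is closed under Poisson bracket (which is the remark $\{\csi_2^2,q\}\equiv 0$ in the paper), so the property persists at every step. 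Consequently the linear flow $e^{A_\omega(\theta)}$ is block-triangular: $\csi_2$ is left invariant (this is exactly the conservation of $p_1$), the block $(\csi_1,\heta_1)$ transforms into a linear combination of $(\csi_1,\heta_1,\csi_2)$ and never sees $\heta_2$, while $\heta_2$ may pick up contributions from all coordinates. Since the secular direction lives entirely in $\heta_2$, and $p_2=\sqrt{2B_0}\,\Im z_1$, $p_1\propto\csi_2$, $x_2$ depend only on $(\csi_1,\heta_1,\csi_2)$, their errors are built from bounded quantities and hence are uniformly $O(\eps)$ in $t$; the secular drift survives only in $x_1$, which alone involves $\Re z_2$, i.e.\ $\heta_2$. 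If you replace ``absent at the leading order'' with this exact triangular structure, your proof is complete.
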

In particular,  if  both $c_\omega$ and $p_1(0)$ are not zero, then the classical flow is not bounded as soon as $\eps\neq0$ is small enough.
\begin{remark} {Clearly, in view
  of \eqref{c}, $c_\omega\neq 0$ holds for  {$\omega$ in a set of
  asymptotically full
  measure and for  $f$ in a set of codimension 1 (e.g. in $L^2$)}. For instance,
by a simple calculus one has that if   $f(\theta)=\sin (\theta)$ {(and thus $n=1$)} then $c_\omega\neq 0$ as soon as $\omega\neq 2B_0$.}
\end{remark}
\smallskip
From the  result {on the classical evolution of }$x_1(t)$, we get 
 a direct application to the large time evolution of the quantum position observable $\hat x_1(t)$. Let us explicit our notations: we denote by $\hat x_j$ and $\hat p_j$, $j=1,2$, the position and momentum operator
 $$
 \hat x_j\psi(x) =x_j\psi(x), \; \hat p_j\psi=\frac{1}{\im}\frac{\partial}{\partial x_j}\psi, \;x=(x_1,x_2),\; p=(p_1, p_2),\ \psi\in {\mathcal H}_{\rm osc}^1.
 $$
 For $r\geq 0$, $ {\mathcal H}_{\rm osc}^r$ is the weighted Sobolev space associated with the harmonic oscillator $H_0:= \hat p^2 + \hat x^2$,
 $$
 {\mathcal H}_{\rm osc}^r = \{\psi\in L^2(\R^2): H_0^{r/2}\psi\in L^2(\R^2)\},
 $$
endowed  with the norm $\Vert\psi\Vert_r = \Vert H_0^{r/2}\psi\Vert_{ L^2(\R^2)}$.\\
 Recall that we are working here with polynomials classical Hamiltonians of degree  at most 2 so the correspondence classical-quantum is exact. That means
 $$
 \begin{pmatrix} \hat x(t)\\\hat p(t)\end{pmatrix} =
 \Phi_B(t) \begin{pmatrix}\hat x\\\hat p\end{pmatrix}\ ,
 $$
{where $(\hat x(t),\hat p(t))$ is the solution of the Heisenberg equation.}

 Our first {quantum} corollary regards the existence of solutions of the quantum Landau Hamiltonian undergoing unbounded growth of Sobolev norms. 
 {Computing  $ \Phi_B(t)$ and using Corollary 1.3 we get }
\begin{corollary} Let $\omega\in{\mathcal C}_\eps$.
We have, in the operator sense, 
$$
\hat x_1(t) =\hat x_1 +\alpha c_\omega\eps^2t\,\hat p_1 + (1+\eps^4t)\left(A_{\varepsilon,\omega}(t)\cdot \hat x + B_{\varepsilon,\omega}(t)\cdot\hat p\right),
$$
where $\alpha\neq 0$ and $c_\omega$ is given by \eqref{c}, and $\vert A_{\varepsilon,\omega}(t)\vert + \vert B_{\varepsilon,\omega}(t)\vert = O(1)$.
In particular, if $c_\omega\neq0$,  there exists  $K\geq 0$ such that 
 for  any $\psi\in {\mathcal H}_{\rm osc}^1$  we have 
 \bea
\Vert \hat x_1(t)\psi\Vert_0  \geq \alpha c_\omega t\eps^2\Vert D_{x_1}\psi\Vert_0 - K(1 +\eps^4t)\Vert\psi\Vert_1.
\eea
 In particular, if $\eps$ is sufficiently small,  then
$\Vert\hat x_1(t)\psi\Vert_0\nearrow +\infty$ as $t\nearrow +\infty$.\\
We have also a lower bound for the quantum average of the time evolution of the  position observable, for $\psi\in {\mathcal H}_{\rm osc}^{1/2}$ :
\bea
\vert\langle\psi, \hat x_1(t)\psi\rangle\vert \geq \alpha c_\omega t\eps^2\vert \langle\psi, D_{x_1}\psi\rangle\vert - K(1 +\eps^4t)\Vert\psi\Vert_{1/2}.
\eea
\end{corollary}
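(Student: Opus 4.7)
\emph{Step 1 (reduction to the classical identity).} Since $h_L(t,x,p)$ is a polynomial of degree at most two in $(x,p)$, its Weyl quantization coincides with $H_L(t)$ and the quantum Heisenberg evolution acts on the operator vector $(\hat x,\hat p)$ by the \emph{same} real symplectic matrix $\Phi_B(t)$ that governs the classical flow. Substituting $\hat x,\hat p$ for $x(0),p(0)$ in formula \eqref{1nb} therefore produces the operator identity
\[
\hat x_1(t) \;=\; \hat x_1 + \alpha c(\eps)\,t\,\hat p_1 + \frac{1}{B(t)}\bigl(\hat p_2(t)-\hat p_2\bigr) + a_{\eps,\omega}(t)\cdot\hat x + b_{\eps,\omega}(t)\cdot\hat p,
\]
valid in the form sense on $\mathcal{H}_{\rm osc}^1$.

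\emph{Step 2 (isolating the secular term).} Insert $c(\eps)=c_\omega\eps^2+O(\eps^4)$ from \eqref{bc}. The leading secular contribution is $\alpha c_\omega\eps^2 t\,\hat p_1$, while the remainder $\alpha\bigl(c(\eps)-c_\omega\eps^2\bigr) t\,\hat p_1=O(\eps^4 t)\hat p_1$ is absorbed into the prefactor $(1+\eps^4 t)$. The remaining terms are $O(1)$ linear combinations of $\hat x,\hat p$ uniformly in $t$ and in $\omega\in\mathcal{C}_\eps$, for three reasons: (i) $B(t)\geq B_0/2$ for $\eps$ small, so $1/B(t)$ is uniformly bounded; (ii) by Corollary \ref{class_esc}, $\hat p_2(t)=\sqrt{2B_0}\,\Im z_1(t)$ modulo an error with $O(\eps)$ coefficients, and $|z_1(t)|=|z_1(0)|$, so $\hat p_2(t)-\hat p_2$ is a bounded affine combination of $\hat x,\hat p$; (iii) $a_{\eps,\omega}(t),b_{\eps,\omega}(t)=O(1)$ uniformly, which is inherited from Theorem \ref{len.mag}: in the KAM-adapted coordinates the only unbounded direction is $\Re z_2(t)$ (already accounted for by the $\alpha c_\omega\eps^2 t\,\hat p_1$ term), and undoing the near-identity change of variable $\tau\circ\tau_0$ with $\tau=\mathbb{I}+O(\eps)$ preserves this uniform boundedness. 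Collecting these contributions into $A_{\eps,\omega}(t)\cdot\hat x+B_{\eps,\omega}(t)\cdot\hat p$ yields the stated representation with $|A_{\eps,\omega}(t)|+|B_{\eps,\omega}(t)|=O(1)$.

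\emph{Step 3 (Sobolev and expectation bounds).} For the norm estimate, apply the reverse triangle inequality:
\[
\|\hat x_1(t)\psi\|_0 \;\geq\; \alpha c_\omega\eps^2 t\,\|\hat p_1\psi\|_0 - \|\hat x_1\psi\|_0 - (1+\eps^4 t)\,\|(A_{\eps,\omega}\cdot\hat x+B_{\eps,\omega}\cdot\hat p)\psi\|_0,
\]
and use $\hat x_j^2+\hat p_j^2\leq H_0$ to obtain $\|\hat x_j\psi\|_0,\|\hat p_j\psi\|_0\leq\|\psi\|_1$; recalling $\hat p_1=D_{x_1}$ and collecting the constants into a single $K$ gives the stated lower bound. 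For the expectation value, the analogous triangle inequality reduces matters to bounding $|\langle\psi,\hat x_j\psi\rangle|$ and $|\langle\psi,\hat p_j\psi\rangle|$, which are each dominated by $C\|\psi\|_{1/2}^2$ since $\hat x_j,\hat p_j$ map $\mathcal{H}_{\rm osc}^{1/2}$ continuously into $\mathcal{H}_{\rm osc}^{-1/2}$.

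\emph{Main difficulty.} The crux is Step 2: verifying that, apart from the single drift direction proportional to $\hat p_1$, every component of the matrix $\Phi_B(t)$ remains uniformly bounded in $t\in\mathbb{R}$ and $\omega\in\mathcal{C}_\eps$. This is exactly what Theorem \ref{len.mag} provides, via the boundedness of $|z_1(t)|$ in normal-form coordinates together with the $O(\eps)$-closeness of the symplectic conjugation $\tau$ to the identity. Once this is in place, the rest is routine linear algebra and standard harmonic-oscillator Sobolev estimates.
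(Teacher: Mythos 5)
Your proposal is correct and follows essentially the same route the paper relies on (the paper states no proof for this corollary, but its intent is clear from the surrounding discussion: the Hamiltonian is quadratic, so the Heisenberg flow is the classical symplectic map $\Phi_B(t)$ applied to the operator vector, and one substitutes into the formula \eqref{1nb} of Corollary \ref{class_esc} and then runs the standard $\mathcal H_{\rm osc}^r$ estimates). One small remark on Step 3: your bound for the expectation value naturally produces $C\|\psi\|_{1/2}^2$ (quadratic in $\psi$), whereas the corollary as printed has $\|\psi\|_{1/2}$ to the first power; your dimensional form is the correct one, and the exponent in the paper appears to be a typo.
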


%
%
%

Our second corollary regards the  Floquet  spectrum of the time quasi-periodic Hamiltonian $H_L(t)$.

\begin{corollary}The quantum dynamics ${\mathcal U}_{\eps,\omega}(t,0)$ of $H_L(t)$  is conjugated to the quantum dynamics ${\rm e}^{-\im t \tilde H_{L,\eps,\infty}}$ of  the stationary Hamiltonian
 $\tilde H_{L,\eps,\infty}:= b(\eps)(D_{\tilde x_1}^2 +\tilde x_1^2) + c(\eps)D_{\tilde x_2}^2$.\\
 Moreover  as far as $b(\eps)>0$ and $c(\eps)>0$,  the spectrum
 $\sigma(H_{L,\eps,\infty})$ of  $\tilde H_{L,\eps,\infty}$  is absolutely continuous, with thresholds at  the
 Landau levels, 
 $$
 \sigma(H_{L,\eps,\infty}) = \bigcup_{j\geq 0}[b(\eps)(j+1/2),
   +\infty[ .
     $$
\end{corollary}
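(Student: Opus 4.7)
The overall strategy is to lift the classical reduction of Theorem \ref{len.mag} to the quantum level via the metaplectic representation, and then analyse the resulting stationary operator by separation of variables.

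First I would observe that both the time dependent Hamiltonian $h_L(t,x,p)$ and the reducing map $\tau\circ\tau_0$ of Theorem \ref{len.mag} lie in the privileged class where the classical-quantum correspondence is exact: $h_L(t,\cdot)$ is a quadratic polynomial in $(x,p)$ and $\tau\circ\tau_0$ is a linear symplectic transformation of $\R^4$. The Weyl quantization of a quadratic polynomial commutes with conjugation by the metaplectic lift of any linear symplectic map, so if $\widehat{\mathcal T}(t)$ denotes the (smooth in $t$) metaplectic unitary associated with $\tau(t)\circ\tau_0$, then a direct computation shows that the propagator $\mathcal U_{\eps,\omega}(t,0)$ of $H_L(t)$ is conjugated through $\widehat{\mathcal T}(t)$ to the propagator of the Weyl quantization of $b(\eps)|z_1|^2+c(\eps)(z_2-\bar z_2)^2$, once the extra piece $-\im\,\widehat{\mathcal T}^*\,\partial_t\widehat{\mathcal T}$ generated by the time dependence has been accounted for (this too is a quadratic symbol, by linearity of $\tau$).

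Next I would rewrite the normal form Hamiltonian in standard coordinates. The symbol $|z_1|^2$ is, up to an affine symplectic rescaling, $\tfrac12(\xi_1^2+x_1^2)$, while $(z_2-\bar z_2)^2=-4(\Im z_2)^2$ is proportional to a pure momentum square $\xi_2^2$; composing with a further linear symplectic change of coordinates and its metaplectic lift then produces exactly the operator $\tilde H_{L,\eps,\infty}$ stated in the corollary (the factors $b(\eps)$ and $c(\eps)$ being transported along). This reduces the study of the Floquet spectrum of $H_L(t)$ to the study of $\sigma(\tilde H_{L,\eps,\infty})$.

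For the spectral analysis I would use separation of variables. Writing $L^2(\R^2)\simeq L^2(\R_{\tilde x_1})\otimes L^2(\R_{\tilde x_2})$, the operator splits as $H_1\otimes I+I\otimes H_2$ with
\[ H_1:=b(\eps)(D_{\tilde x_1}^2+\tilde x_1^2),\qquad H_2:=c(\eps)\,D_{\tilde x_2}^2. \]
When $b(\eps)>0$, $H_1$ is a rescaled harmonic oscillator, essentially self-adjoint with pure point spectrum, the eigenvalues being $b(\eps)$ times the odd integers (equivalently, up to the normalization convention built into the definition of $b$, the sequence $b(\eps)(j+1/2)$, $j\in\N$), with Hermite-type eigenfunctions forming a basis. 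When $c(\eps)>0$, $H_2$ is conjugated by the Fourier transform to multiplication by $c(\eps)\xi_2^2$ on $L^2(\R_{\xi_2})$, hence it is absolutely continuous with spectrum $[0,+\infty)$. By the spectral theorem for commuting self-adjoint operators, $\sigma(\tilde H_{L,\eps,\infty})=\overline{\sigma(H_1)+\sigma(H_2)}=\bigcup_{j\geq 0}[b(\eps)(j+1/2),+\infty)$, and since each eigenspace of $H_1$ tensorised with $L^2(\R_{\tilde x_2})$ carries the absolutely continuous spectrum of $H_2$, the whole spectrum of $\tilde H_{L,\eps,\infty}$ is absolutely continuous.

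The main obstacle is conceptually located in the first step: one must justify that the metaplectic lift of the time dependent, quasi-periodic KAM transformation $\tau(t)\circ\tau_0$ genuinely intertwines the two propagators, including the contribution of $-\im\,\widehat{\mathcal T}^*\,\partial_t\widehat{\mathcal T}$, and that the resulting stationary generator coincides with the Weyl quantization of the classical normal form. This is standard because we stay in the class of symbols of degree $\leq 2$, where Egorov's theorem is exact, but it has to be stated carefully; the rest of the argument is essentially bookkeeping.
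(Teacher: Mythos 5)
Your proposal is correct and follows essentially the same route as the paper: lift the classical reduction of Theorem \ref{len.mag} to the quantum level using the exactness of the classical--quantum correspondence for quadratic symbols (the paper states this tersely, you spell out the metaplectic lift and the $-\im\,\widehat{\mathcal T}^*\partial_t\widehat{\mathcal T}$ term, but it is the same mechanism), rewrite the normal form as a $1$-d harmonic oscillator plus a free Laplacian, and read off the spectrum. The only cosmetic difference is in the last step, where you argue via the tensor decomposition $L^2(\R^2)\simeq L^2(\R_{\tilde x_1})\otimes L^2(\R_{\tilde x_2})$ while the paper exhibits the generalized eigenfunctions $\Psi_{j,\xi}=\psi_j(x_1)\e^{\im x_2\xi}$ directly; these are two phrasings of the same separation-of-variables argument.
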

\begin{proof}  Denote by  $h_{L,\eps,\infty}:= b(\eps)|z_1|^2+c(\eps)(z_2-\bar z_2)^2$ the stationary classical  Hamiltonian  to which $h_L(t)$ is conjugated, see 
\eqref{bc.1}. In the real coordinates $(x,p)$ 
 we have $h_{L,\eps,\infty}(x,p) = \frac{b(\eps)}{2}(p_2^2 + x_2)^2 +  \frac{c(\eps)}{2}(x_1-p_2)^2$. 
 By the symplectic change of coordinates
 $\tilde x_1=x_1-p_2, \tilde p_1 =p_1$, $\tilde x_2=x_2 -p_1, \tilde p_2=p_2$, we have 
 $$\tilde h_{L,\eps,\infty}(\tilde x,\tilde p)= b(\eps)(\tilde p_2^2+ \tilde x_2)^2+c(\eps) \tilde p_1^2. $$The first part of the Corollary follows.\\
 For the second part notice that we have the following family of generalized eigenfunctions:
 $ \Psi_{j,\xi}(x_1,x_2) = \psi_j(x_1){\rm e}^{\im x_2\xi}$ such that we have
 $$
 \tilde H_{L,\eps,\infty} \Psi_{j,\xi} = (b(\eps)(j+1/2) + c(\eps)\xi^2)\Psi_{j,\xi}.
 $$
Hence we get a description  of the spectrum of $\tilde H_{L,\eps,\infty}$. 
\end{proof}

\subsection{Main result in symmetric gauge}\label{symgauge}

We still consider a magnetic Schr\"odinger operator with a time-quasiperiodic  magnetic field, i.e.  $B(t)=B_0+\eps f(\omega t)$ with  $f$ real analytic on the torus ${\mathbb T}^n$, and $\hat f(0)=0$,  but now in the symmetric gauge, namely
\begin{equation}
\label{landauSym}
H_{sL}(t) = \left(D_{x_1} - \frac12B(t)x_2\right)^2   + \left(D_{x_2} +\frac12B(t)x_1\right)^2  \ , 
\end{equation}
and the frequency vector $\omega$ in the set of nonresonant frequencies $\D_0$ of Definition
\ref{diof}.\\
We can write
$$H_{sL}(t)= H_{sL}+ R_{sL}(\omega t)$$
where 
\begin{align*}
H_{sL}&= \left(D_{x_1} - \frac12B_0x_2\right)^2   + \left(D_{x_2} +\frac12B_0x_1\right)^2,\\
R_{sL}(\omega t)&= \eps f(\omega t) \left(x_1 (D_{x_2}+\frac{B_0}2 x_1)  -x_2 (D_{x_1}-\frac{B_0}2 x_2)\right) +\frac14\eps^2f(\omega t)^2 (x_1^2+x_2^2).\end{align*}
We denote by $h_{sL}(t)$ the corresponding classical Hamiltonian:
\begin{equation}
\label{landausym}h_{sL}(t)= \big(p_1-\frac{B(t)}2x_2\big)^2+\big(p_2+\frac{B(t)}2x_1\big)^2=h_{sL}+r_{sL}(\omega t).\end{equation}
We introduce the symplectic variables
$$p'_1=x_1\ ,\ x'_2=x_2\ \ x_1'=-p_1\ ,\ p_2'=p_2 
$$
and in these variables $h_{sL}$ reads
$$h_{sL}=\left(x'_1+\frac{B_0}2x'_2\right)^2+\left(p'_2+\frac{B_0}2p'_1\right)^2.$$
Then in the new symplectic variables $(y_1,y_2,\eta_1,\eta_2)$ defined by
$$y_1=\frac1{\sqrt{B_0}}x'_1+ \frac{\sqrt{B_0}}2 x'_2,\quad \eta_1=\frac{\sqrt{B_0}}2p'_1+\frac1{\sqrt{B_0}}p'_2$$
and
$$y_2=\frac1{\sqrt{B_0}}x'_1- \frac{\sqrt{B_0}}2 x'_2,\quad \eta_2=\frac{\sqrt{B_0}}2p'_1-\frac1{\sqrt{B_0}}p'_2$$
we obtain that $h_{sL}$ is the {\em degenerate} two-dimensional Harmonic oscillator
$$h_{sL}=B_0(y_1^2+\eta_1^2)=2B_0 |z_1|^2$$
where $z_1=\frac{y_1+\im \eta_1}{\sqrt2}$ and similarly $z_2=\frac{y_2+\im \eta_2}{\sqrt2}$. We denote by $\tau_0$ the linear symplectic   transformation from $\R^4$ to $\C^2$ defined by
 $(z_1,z_2) = \tau_0(x,p)$.
We note that in the complex variables the symplectic form reads: $$dy_1\wedge d\eta_1+dy_2\wedge d\eta_2=\im (dz_1\wedge d\bar z_1+dz_2\wedge d\bar z_2).$$
In order to state the next result we introduce 
\be\label{d}
d_\omega=\int_{\T^n} h(\theta)^2 d\theta
\ee
where
\begin{equation}
  \label{d.1}
h(\theta)=-\frac1{\sqrt{2B_0}}\sum_{k\neq 0}\frac{\omega\cdot k+2\im
  B_0}{\omega\cdot k+2 B_0}\hat f(k) e^{\im k\cdot \theta}.
\end{equation}
\begin{remark}
  \label{dnonZero}
  The number $d_\omega$ is well defined and real for  $\omega \in \D_0$.
\end{remark}
Our next result shows that,  in the symmetric gauge, the perturbed classical Hamiltonian
$h_{sL}(t)$ in \eqref{landausym} is conjugated to a two dimensional Harmonic oscillator which is {\em non-degenerate} provided the constant $d_\omega \neq 0$.
\begin{theorem} 
  \label{len.mag.sym} There exists $\eps_0>0$ such that for $|\eps|<\eps_0$, 
   there exist
   \begin{itemize}
   \item an asymptotically full measure set of frequencies $\mathcal C_\eps\in \D_0$ satisfying:
   $$\lim_{\eps\to 0}{\text{meas}(\D_0\setminus\mathcal C_\eps)}=0;$$
   \item a linear, symplectic change of variable $\tau$,   depending on $\omega, t,\eps$,  which is close to the identity, namely  $\tau = \mathbb{I} + O(\eps)$
   uniform in the other parameters,
   \end{itemize}
    such that, for $\omega\in\mathcal C_\eps$, and provided  $d_\omega$ in  \eqref{d} does not vanish, 
\begin{equation}\label{hsL.n}
{(h_{sL}}(t)\circ \tau_0^{-1})\circ \tau =b(\eps)|z_1|^2+d(\eps)|z_2|^2
\end{equation}
with
$$b(\eps)= 2B_0+O(\eps^2),\quad d(\eps)= d_\omega\eps^2+O(\eps^4)$$
and  $d_\omega$ is given by \eqref{d} and does not vanish  for $\omega\in\mathcal C_\eps$.
\end{theorem}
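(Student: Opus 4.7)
The plan is to run a constructive linear KAM scheme completely analogous to the one used for Theorem \ref{len.mag}, following \cite{BGMR1}; the real novelty is that in the symmetric gauge the first normal form correction produces a genuine second oscillation frequency instead of a degenerate drift. First I would pass to the complex coordinates via $\tau_0$, in which the unperturbed Hamiltonian reads $h_{sL}\circ\tau_0^{-1}=2B_0|z_1|^2$ and the perturbation $r_{sL}$ becomes a quasi-periodic quadratic polynomial in $(z,\bar z)$. A direct calculation, based on the fact that the angular momentum combination $x_1p_2-x_2p_1+\tfrac{B_0}{2}|x|^2$ pulls back to $2|z_1|^2-2\,\mathrm{Re}(z_1z_2)$ up to normalisation, shows that the $O(\eps)$ part of $r_{sL}\circ\tau_0^{-1}$ is $\eps f(\omega t)$ times a combination of the resonant monomial $|z_1|^2$ (whose time-average vanishes because $\hat f(0)=0$) and the non-resonant pair $z_1z_2,\,\bar z_1\bar z_2$; the $O(\eps^2)$ piece has the same structure with coefficient $f(\omega t)^2$.

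For the first KAM step I would solve the homological equation
\[
\omega\cdot\partial_\theta\chi_1+\{h_{sL},\chi_1\}=r_{sL}^{(1)}-N\langle r_{sL}^{(1)}\rangle,
\]
where $r_{sL}^{(j)}$ is the $O(\eps^j)$ part, $\langle\cdot\rangle$ denotes the average over $\theta$ and $N$ projects onto the kernel of $\{h_{sL},\cdot\}$. Expanded in Fourier, $\chi_1$ carries the divisors $\omega\cdot k\pm 2B_0$ on the non-resonant monomials $z_1z_2,\bar z_1\bar z_2$ and $\omega\cdot k$ with $k\neq 0$ on the resonant monomial $|z_1|^2$, both controlled by Definition \ref{diof}. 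Conjugating with the time-1 flow of $\chi_1$ and collecting the $\theta$-averaged resonant contributions of order $\eps^2$, namely $\tfrac12 N\langle\{\chi_1,r_{sL}^{(1)}\}\rangle+N\langle r_{sL}^{(2)}\rangle$, an explicit computation produces exactly $d_\omega\eps^2|z_2|^2$ as the new $|z_2|^2$-coefficient, with $d_\omega$ as in \eqref{d}. Terms proportional to $z_2^2$ or $\bar z_2^2$, which are a priori allowed by the resonance condition, must cancel: this follows because $\chi_1$ comes in complex-conjugate pairs and $\{z_1z_2,\bar z_1\bar z_2\}$ is proportional to $|z_1|^2+|z_2|^2$ rather than $z_2^2$ or $\bar z_2^2$.

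The rest of the scheme then iterates in the standard way: at stage $n$ the Hamiltonian reads $b^{(n)}|z_1|^2+d^{(n)}|z_2|^2$ plus a small quasi-periodic remainder, which is removed by a further homological equation. The main obstacle, and the place where the hypothesis $d_\omega\neq 0$ becomes indispensable, lies in the second Melnikov-type conditions
\[
|\omega\cdot k+\ell_1 b^{(n)}+\ell_2 d^{(n)}|\geq\frac{\gamma_n}{1+|k|^\tau},\qquad (\ell_1,\ell_2)\in\{(\pm 2,0),(0,\pm 2),(\pm 1,\pm 1)\},
\]
required at every step. Since $d^{(n)}=d_\omega\eps^2+O(\eps^4)$, the critical case $\ell_2=\pm 2,\,k=0$ forces $\gamma_n\lesssim|d_\omega|\eps^2$; choosing $\gamma_n\sim\eps^2$ the total excluded set of frequencies has measure $O(\eps^2)$, whence $\meas(\D_0\setminus\mathcal C_\eps)\to 0$ as $\eps\to 0$. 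Convergence of the linear KAM iteration then yields the limiting symplectic transformation $\tau=\mathbb{I}+O(\eps)$ conjugating $h_{sL}(t)\circ\tau_0^{-1}$ to the non-degenerate normal form $b(\eps)|z_1|^2+d(\eps)|z_2|^2$, as claimed.
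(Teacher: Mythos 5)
Your proposal follows the same approach as the paper: two explicit KAM steps in complex coordinates to exhibit the effective second frequency $d_\omega\eps^2$ in front of $|z_2|^2$ (with the same algebraic observation that $r_1$ and $\chi_1$ only contain the coupling monomials $z_1z_2,\,\bar z_1\bar z_2$, which Poisson--bracket to $|z_1|^2+|z_2|^2$, so no $z_2^2$ or $\bar z_2^2$ is produced), followed by a convergent linear KAM iteration --- which the paper packages as Theorem~\ref{KAMclassico1}, designed precisely for the configuration $\nu_2\sim\eps^2$ with a perturbation of size $O(\eps^3)$.

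One detail in your iteration is imprecise. The case $(k,\ell_1,\ell_2)=(0,0,\pm2)$ does not exclude any frequencies: the corresponding divisor equals $2|d^{(n)}|\geq c|d_\omega|\eps^2>0$ automatically once $d_\omega\neq0$. What it actually constrains is the size of the transformation $\chi$, whose $\xi_2^2,\,\eta_2^2$ coefficients pick up a factor $1/\nu_2\sim\eps^{-2}$; this is compensated because the remainder at that stage is $O(\eps^3)$, which is exactly the content of the $\min\{\eps^2,\kappa^2\}$ modification of the homological estimate in the paper. Moreover your claimed total excluded measure $O(\eps^2)$ with a uniform $\gamma_n\sim\eps^2$ does not follow: the set excluded at step $n$ scales like $K_n^{2n}\gamma_n$ with the Fourier cutoff $K_n$ growing along the iteration, so with constant $\gamma_n$ the sum is not $O(\eps^2)$. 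The paper instead lets $\kappa_n\to0$ and obtains $O(\eps^{1/9})$. This affects only the quantitative rate, not the asymptotically-full-measure conclusion, so the argument survives once the iteration parameters are set up as in Section~\ref{KAM}.
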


In the new coordinates the motion of \eqref{hsL.n} is easily computed:
\bea
z_1(t) &=& {\rm e}^{-\im b(\eps)t}z_1(0)\nonumber \ , \qquad 
z_2(t) = {\rm e}^{-\im d(\eps)t}z_2(0).\nonumber
\eea
In particular, in the symmetric gauge, provided the constant  $d_\omega$ in \eqref{d} does not vanish, all the trajectories are bounded, contrary to what happens in the Landau gauge.

Also in this case we are able to describe the quantum flow, which in this case  is uniformly bounded in any Sobolev space  ${\mathcal H}_{\rm osc}^r $.\\ 
Let  ${\mathcal U}_{\eps, \omega}(t,s)$ be the quantum  propagator defined by the Hamiltonian $H_{sL}(t)$  in \eqref{landauSym}. 
So we have $\im\partial_t{\mathcal U}_{\eps, \omega}(t,s)= H_{sL}(t){\mathcal U}_{\eps, \omega}(t,s)$, ${\mathcal U}_{\eps, \omega}(s,s)= {\mathbb I}$.
\begin{corollary} 
\label{cor:sym}
 There exists $\eps_0>0$ such that for $\vert\eps\vert \leq \eps_0$,  for any $r > 0$  there exist $0<c_r\leq C_r$ such that if $\om\in \mathcal C_\eps$  we have for any $\psi_0\in  {\mathcal H}_{\rm osc}^r $, 
\beq\label{qpropag}
    c_r\Vert\psi_0\Vert_r \leq \Vert{\mathcal U}_{\eps,\omega}(t,0)\psi_0\Vert_r \leq  C_r\Vert\psi_0\Vert_r,\; \forall t\in\R.
\eeq
\end{corollary}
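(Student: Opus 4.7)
The plan is to lift the classical linear symplectic conjugation of Theorem \ref{len.mag.sym} to the quantum level via the metaplectic representation, reducing the propagator $\mathcal{U}_{\eps,\omega}(t,0)$ to that of a time independent non-degenerate two dimensional harmonic oscillator, for which the Sobolev-type bounds are immediate. The key observation is that since $H_{sL}(t)$ is the Weyl quantization of the quadratic symbol $h_{sL}(t,x,p)$, the classical-quantum correspondence is exact and any linear symplectic change of variables can be implemented by a unitary metaplectic operator.

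First, I would introduce the time-dependent unitary $U(t)$ obtained as the metaplectic lift of the composition $\tau_0^{-1}\circ \tau(t)$ provided by Theorem \ref{len.mag.sym}. The standard Egorov-type computation for quadratic Hamiltonians (see the analogous argument in \cite{BGMR1}) yields the intertwining relation
\begin{equation*}
U(t)^{*}\, \mathcal{U}_{\eps,\omega}(t,0)\, U(0) \;=\; e^{-\im t \tilde H_\infty}, \qquad \tilde H_\infty := \mathrm{Op}^{\mathrm W}\!\bigl(b(\eps)|z_1|^2 + d(\eps)|z_2|^2\bigr),
\end{equation*}
where $\tilde H_\infty$ is a genuine two-dimensional harmonic oscillator. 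Indeed, the time dependence of $\tau$ produces an extra generator $-\im U^{*}\partial_t U$, which together with $U^{*}H_{sL}(t)U$ conjugates to $\tilde H_\infty$ precisely because the classical statement \eqref{hsL.n} already accounts for this correction in its KAM construction.

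Next I would verify that $\tilde H_\infty$ preserves every weighted Sobolev norm exactly. Since $b(\eps), d(\eps)\neq 0$, the operator $\tilde H_\infty$ is unitarily equivalent (by a further metaplectic rotation) to a decoupled pair of one-dimensional harmonic oscillators, hence it has pure point spectrum with eigenfunctions being (rotations of) Hermite products. The propagator $e^{-\im t\tilde H_\infty}$ acts diagonally in this basis and therefore preserves $\|H_0^{r/2}\cdot\|_{L^2}$ for every $r\geq 0$. Combined with the intertwining, this gives
\begin{equation*}
\|\mathcal{U}_{\eps,\omega}(t,0)\psi_0\|_r \leq \|U(t)\|_{r\to r}\, \|U(0)^{-1}\|_{r\to r}\, \|\psi_0\|_r,
\end{equation*}
and similarly for the lower bound.

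The main technical point is thus to bound $\|U(t)^{\pm 1}\|_{r\to r}$ uniformly in $t\in\R$ and $\omega\in\mathcal C_\eps$. Since $\tau(t) = \mathbb{I} + O(\eps)$ uniformly in $(t,\omega)$, the underlying symplectic matrix lies in a bounded neighborhood of the identity in $\mathrm{Sp}(4,\R)$. It is standard for metaplectic operators associated with a compact set of symplectic matrices that they act boundedly on the Shubin-Sobolev spaces $\mathcal H_{\mathrm{osc}}^r$, the norm being a continuous function of the symplectic matrix; one can see this by decomposing $\tau(t)$ as a product of elementary symplectic transformations (shears and scalings) whose metaplectic lifts are explicit and whose operator norms on $\mathcal H_{\mathrm{osc}}^r$ are readily controlled in terms of the matrix entries. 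Carrying out this step carefully is the main obstacle, as it requires tracking the $r$-dependence of the constants; but since the symplectic matrix stays in a fixed compact set once $\eps\leq\eps_0$, we obtain constants $c_r, C_r$ independent of $t$ and $\omega$, completing the proof of \eqref{qpropag}.
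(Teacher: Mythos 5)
Your proof takes essentially the same route as the paper. Both lift the classical linear symplectic conjugation of Theorem \ref{len.mag.sym} to the quantum level (you call it the metaplectic representation, the paper writes the corresponding Weyl-quantized generator $\hat\chi_{\eps,\omega}(t)$ and sets $U_{\eps,\omega}(t)=e^{\im\eps\hat\chi_{\eps,\omega}(t)}$); both then reduce $\mathcal U_{\eps,\omega}(t,0)$ to the flow of a time-independent non-degenerate 2D harmonic oscillator, which preserves the $\mathcal H_{\rm osc}^r$ norms; and both reduce the remaining work to showing that the conjugating unitaries are bounded on $\mathcal H_{\rm osc}^r$ uniformly in $t,\omega$. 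What you call ``the main technical point'' — boundedness of the metaplectic lift of a near-identity, time-dependent symplectic matrix uniformly on $\mathcal H_{\rm osc}^r$ — is exactly what the paper cites from \cite{BGMR1}, Theorem 2.7; your proposed argument (decompose $\tau(t)$ into elementary shears/scalings with explicit lifts, use that the matrix ranges over a fixed compact set) is a reasonable sketch of that result.

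Two small remarks. First, the paper's verification that $e^{-\im t H_{sL,\eps,\infty}}$ preserves $\mathcal H_{\rm osc}^r$ is a bit more direct than yours: it observes $[H_{sL,\eps,\infty},Z_1+Z_2]=0$ with $Z_1+Z_2$ a non-degenerate harmonic oscillator defining equivalent Sobolev norms, rather than invoking an extra metaplectic rotation; both arguments work, but be sure to note (as you implicitly do) that the rotation introduces only an $r$-dependent multiplicative constant, not exact preservation of $\Vert H_0^{r/2}\cdot\Vert$. Second, your intertwining formula $U(t)^*\mathcal U(t,0)U(0)=e^{-\im t\tilde H_\infty}$ is in fact the cleaner form (the paper writes $U_{\eps,\omega}(t)$ on both sides, presumably a minor slip); in any case only uniform two-sided bounds on $U(\cdot)$ are used, so the conclusion is unaffected.
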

\begin{proof} We follow the proof given in \cite{BGMR1}, Corollary 1.3.  We shall give here only the main steps.\\
  For simpler notation we assume that $B_0=1$. Let 
  $$
  H_{sL,\eps,\infty} = \frac{b(\eps)}{2}((\hat p_1-\hat x_2)^2 +\hat p_2^2) +  \frac{d(\eps)}{2}((\hat p_2-\hat x_1)^2 +\hat p_1^2),
  $$
  and $Z_1=(\hat p_1-\hat x_2)^2 +\hat p_2^2$, $Z_2=(\hat p_2-\hat x_1)^2 +\hat p_1^2 $.  Notice that $Z_j = \hat z_j^*\hat z_j$. Moreover
  $[Z_1,  Z_2]=0$. So  $[H_{sL,\eps,\infty}, Z_1+Z_2]=0$. But  $Z_1+Z_2$ is a non degenerate harmonic oscillator hence 
  ${\rm e}^{-\im tH_{sL,\eps,\infty}}\psi_0$ satisfies the estimates \eqref{qpropag}. Then as in \cite{BGMR1}, from the classical KAM construction  there exists
  $$
 \chi_{\eps,\omega}(t,x,p)=\begin{pmatrix} x \\  p \end{pmatrix}\cdot S_{\eps,\omega}(t)\begin{pmatrix} x \\ p \end{pmatrix},
  $$
 where $S_{\eps,\omega}(t)$ is a symmetric matrix with uniformly bounded entries. Let $U_{\eps,\omega}(t) = {\rm e}^{\im \eps\hat\chi_{\eps,\omega}(t)}$ we have
 $$
 {\mathcal U}_{\eps, \omega}(t,0) = U^*_{\eps,\omega}(t){\rm e}^{-\im tH_{\eps,\infty}}U_{\eps,\omega}(t).
 $$
Using that (\cite{BGMR1}, Theorem 2.7), uniformly in $(t,\eps,\omega)$, we have
$$
\tilde c_r\Vert\psi\Vert_r \leq \Vert U^*_{\eps,\omega}(t)\psi\Vert_r\leq \tilde C_r\Vert\psi\Vert_r, 
$$
we get \eqref{qpropag}.
\end{proof}

\begin{corollary}
 The  symmetric Landau Hamiltonian $H_{sL}(t)$ is reducible to a stationary Hamiltonian  $H_{sL,\eps,\infty}$ with a discrete spectrum with all eigenvalues of finite multiplicities as far $b(\eps)>0, d(\eps)\neq 0$. Moreover, up to a linear symplectic transformation, $H_{sL,\eps,\infty}$ is combination of  two 1-d harmonic oscillators
 $$
 H_{sL,\eps,\infty}= b(\eps)(D_{x_1}^2+x_1^2) + d(\eps)(D_{x_2}^2+x_2^2).
 $$
 \end{corollary}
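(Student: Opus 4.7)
The argument combines Theorem \ref{len.mag.sym} with the exactness of Weyl quantization for quadratic Hamiltonians, together with standard spectral theory of the harmonic oscillator. First, Theorem \ref{len.mag.sym} produces a linear symplectic change of variables $\tau\circ\tau_0$ (depending on $t,\eps,\omega$) reducing the classical Hamiltonian $h_{sL}(t)$ to the stationary quadratic form $b(\eps)|z_1|^2+d(\eps)|z_2|^2$. Because both the Hamiltonian and the transformation are polynomials of degree at most $2$ in $(x,p)$, the metaplectic correspondence is exact: $\tau\circ\tau_0$ lifts to a unitary $U_{\eps,\omega}(t)$ on $L^2(\R^2)$ that conjugates the propagator $\mathcal{U}_{\eps,\omega}(t,0)$ of $H_{sL}(t)$ to ${\rm e}^{-\im t H_{sL,\eps,\infty}}$, where $H_{sL,\eps,\infty}$ is the Weyl quantization of $b(\eps)|z_1|^2+d(\eps)|z_2|^2$. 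This is exactly the mechanism already used in the proof of Corollary \ref{cor:sym}, so reducibility is granted; what remains is to identify $H_{sL,\eps,\infty}$ and compute its spectrum.

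Next, I would write down the normal form in real coordinates. With $z_j=(y_j+\im\eta_j)/\sqrt 2$ as introduced before \eqref{hsL.n}, one has $2|z_j|^2=y_j^2+\eta_j^2$, so Weyl quantization of $b(\eps)|z_1|^2+d(\eps)|z_2|^2$ yields
\[
\tfrac{b(\eps)}{2}(D_{y_1}^2+y_1^2)+\tfrac{d(\eps)}{2}(D_{y_2}^2+y_2^2).
\]
Absorbing the overall factor $1/2$ into the constants (or, equivalently, applying a trivial rescaling symplectic transformation on each degree of freedom) and renaming $y_j$ as $x_j$ gives the claimed expression
\[
H_{sL,\eps,\infty}= b(\eps)(D_{x_1}^2+x_1^2)+d(\eps)(D_{x_2}^2+x_2^2),
\]
a sum of two \emph{commuting} one-dimensional harmonic oscillators acting on distinct variables.

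Finally, I would extract the spectral statement. The nonnegativity of $H_{sL}(t)=(D_{x_1}-\tfrac{B(t)}{2}x_2)^2+(D_{x_2}+\tfrac{B(t)}{2}x_1)^2$ is preserved by unitary conjugation, so $b(\eps),d(\eps)\geq 0$; hence the hypotheses $b(\eps)>0$, $d(\eps)\neq 0$ force $b(\eps),d(\eps)>0$. Each factor $D_{x_j}^2+x_j^2$ is essentially self-adjoint, with compact resolvent and purely discrete simple spectrum $\{2j+1:j\in\N\}$; the two factors commute, so on $L^2(\R^2)$ the sum has compact resolvent and
\[
\sigma(H_{sL,\eps,\infty})=\bigl\{\,b(\eps)(2j+1)+d(\eps)(2k+1)\;:\;j,k\in\N\,\bigr\}.
\]
For any $\lambda$ in this set the defining equation constrains $j\leq\lambda/(2b(\eps))$ and $k\leq\lambda/(2d(\eps))$, so only finitely many pairs $(j,k)$ contribute, proving finite multiplicity.

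The only mild subtlety — which I would regard as the ``main'' point requiring care rather than a genuine obstacle — is the deduction that $d(\eps)>0$ from $d(\eps)\neq 0$; without it the sum-of-oscillators interpretation would degenerate, possibly producing a continuous spectral component. Everything else is a direct application of Theorem \ref{len.mag.sym} and elementary spectral theory.
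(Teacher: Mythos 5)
Your proof follows essentially the same route as the paper: invoke Theorem \ref{len.mag.sym}, quantize exactly (the Hamiltonians and transformations being quadratic, the metaplectic correspondence is exact, as already exploited in the proof of Corollary \ref{cor:sym}), and diagonalize the resulting sum of two commuting 1-d harmonic oscillators. The paper does this directly with $Z_j=\hat z_j^*\hat z_j$, picking a joint eigenbasis since $[Z_1,Z_2]=0$ and reading off the eigenvalues $b(\eps)(j_1+\tfrac12)+d(\eps)(j_2+\tfrac12)$; you pass to decoupled real coordinates and Weyl quantize. The substance is the same.

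However, the step you yourself flag as the main subtlety is the one that does not hold as argued. You claim $d(\eps)\geq 0$ because ``nonnegativity of $H_{sL}(t)$ is preserved by unitary conjugation.'' But the reducibility transformation is time-dependent, so the operators are not related by a plain conjugation: differentiating $\mathcal U_{\eps,\omega}(t,0)=U_{\eps,\omega}^*(t)\,e^{-\im t H_{sL,\eps,\infty}}\,U_{\eps,\omega}(0)$ gives
\begin{equation*}
H_{sL,\eps,\infty}=U_{\eps,\omega}(t)\,H_{sL}(t)\,U_{\eps,\omega}^*(t)-\im\,U_{\eps,\omega}(t)\,\partial_t U_{\eps,\omega}^*(t),
\end{equation*}
and the extra $\partial_t$ term can shift the spectrum and change its sign. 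Indeed, the explicit computation of $d_\omega$ in the proof of Theorem \ref{len.mag.sym} exhibits it as a sum of terms $|\hat f(k)|^2\,\frac{(\omega\cdot k)^2+4B_0^2}{(\omega\cdot k)^2-4B_0^2}$, whose signs depend on whether $|\omega\cdot k|\gtrless 2B_0$; so $d_\omega$ (hence $d(\eps)$ for small $\eps$) need not be positive. The paper does not derive $b(\eps)>0$, $d(\eps)\neq 0$ — these are taken as hypotheses of the corollary — and your purported derivation is not a valid substitute. (A smaller point: you cannot ``absorb the factor $1/2$ by a rescaling symplectic transformation,'' since for any linear symplectomorphism the spectrum of $a D^2 + c x^2$, i.e. $\sqrt{ac}(2j+1)$, is invariant; the paper's own normalization already carries this factor-of-two issue, so this is cosmetic, but the parenthetical justification you give is not correct.)
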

 \proof  Keeping the notations of the proof of Corollary \ref{cor:sym}, 
 since $Z_1+Z_2$ has compact inverse, it has pure point
    spectrum and there exists a basis of eigenfunctions, which, since
    $[Z_1,Z_2]=0$ can be choose in such a way that they are also
    eigenfunctions of both $Z_1$ and $Z_2$. Thus, if one denotes by
    $\lambda_{i,j} = j+1/2$, $j \in\N$ the eigenvalues of $Z_i$, one
    has that $H_{sL,\eps,\infty}$ is diagonal
in the same basis,  with eigenvalues $\mu_{j_1,j_2}(\eps)=
b(\eps)(j_1+1/2)+ d(\eps)(j_2+1/2)$. So the  symmetric Landau
Hamiltonian $H_{sL}(t)$ is reducible to a stationary Hamiltonian
$H_{sL,\eps,\infty}$ with a discrete spectrum with all eigenvalues of
finite multiplicities as far $b(\eps)>0, d(\eps)\neq 0$.  \endproof

\subsection{About the change of gauge}\label{changegauge}

The fact that one has a completely different behaviour in the
  case of the Landau gauge and in the case of the symmetric gauge
  could seem surprising at first sight, however we remark that they
  correspond to different physical situations. Indeed, the electric
  and the magnetic field are given by 
\begin{equation}\label{gauge}{\bold B}=\nabla_x \times A(x,t)\quad
  \text{ and }\quad {\bold E}=-\frac{\partial A}{\partial
    t}(x,t)-\nabla_x V(x,t).
\end{equation}
Thus, in the time dependent case,  they differ  for the case of
the Landau gauge and the case of the symmetric gauge. As it is well known,
there exists a gauge transformation which allows to pass from one
gauge to the other by keeping the same electromagnetic
fields. For example the electric and magnetic fields can be chosen as 
$$
V(x,t)=0\ ,\quad A(x,t)=B(t)(x_2,0,0)
$$
or as 
$$
V(x,t)=-\frac{B'(t)}2x_1x_2\ ,\quad A(x,t):=\frac{B(t)}2(x_2,-x_1,0) \ . 
$$
The first corresponds to the Landau gauge, while the second
corresponds to the symmetric gauge plus a scalar potential.

\smallskip

\subsection{Related literature}  

Most of the literature about the Landau Hamiltonian regards the
asymptotic behavior of the perturbed  spectrum  under 
 time {\em independent} perturbations, for example scalar potentials in different  classes.
These works ensure conditions on the perturbation so that the perturbed spectrum is asymptotically localized around the Landau levels $\{ 2B_0(j+\frac12)\}_{j \in \N}$, a fact which is not trivial due to the infinite multiplicity of these.
 We mention for example the works \cite{Tag, Raikov, Push,  Lun, Lun2, Her} and reference therein.

The case of time {\em dependent} perturbations, such as \eqref{LS},  is less studied. We mention \cite{Y,  Y2, AnFa} which prove the existence of the  quantum flow, and \cite{MaRo, BGMR2, LZZ2} giving time upper bounds on the dynamics.
The present paper aims to  prove finer properties about the quantum dynamics, and in particular investigates the dicotomy of  ``existence of solutions with unbounded trajectories'' vs ``all trajectories are  bounded''. 
This question has received, in the last decade, a lot of attention. 

In  case of linear, time dependent Schr\"odinger equations,  such as \eqref{LS}, 
the first result about existence of solutions with unbounded paths is due to Bourgain  \cite{bourgain99} on the torus. 
Recently,  several works have considered non-degenerate Harmonic oscillators on $\R^d$ and constructed time dependent perturbations in the form of  pseudodifferential operators  \cite{del,  Mas19}, polynomial functions \cite{BGMR1,  LLZ1, LLZ,  LLZ2}, or classical potentials   \cite{ FaouRaph, Thomann},  that create   solutions with unbounded trajectories. We also cite the recent results \cite{Mas21, Mas22} which prove that generic,  time periodic,  pseudodifferential perturbations provoke instability phenomena.

 On the opposite side, many  works prove that, 
when the perturbation is  small in size and quasi-periodic in time with a non resonant frequency $\omega$, 
 all trajectories are bounded in Sobolev spaces.  There results are based on 
   KAM reducibility methods ensuring that the  linear propagator has operatorial norm (in Sobolev spaces)  bounded  uniformly in time, in the same spirit of our Corollary \ref{cor:sym}.
This is the case,  in  great generality,  for systems in 1-spatial dimensions: limiting ourselves to results considering perturbations of the Harmonic oscillator on $\R$, we cite 
   \cite{C87, Wan08, GT11,  Bam18, Bam17}.
In a  higher dimensional setting,    such as the one of equation \eqref{LS},  there are few  KAM reducibility results.  
We cite \cite{EK,PP} for the Schr\"odinger on $\T^d$, 
\cite{GP16, BGMR1} for the Harmonic oscillators on $\R^d$, 
\cite{Mon19} for the wave on $\T^d$, and
\cite{FGMP, BLM} for transport equations on $\T^d$.

%
%
%

\section{Proofs of main Theorems}
\subsection{Proof of Theorem \ref{len.mag}} \label{proofLand}
We prefer to work in the extended phase space in which we add the angles
$\theta\in\T^n$ as new variables and their conjugated momenta
$I\in\R^n$. So our phase space is now $\T^n\times \R^n\times \C^4\ni (\theta,I,z_1,z_2)$, with
$\C^2$ considered as a real vector space. The symplectic form is $dI\wedge d\theta+\im dz\wedge d\bar z
$ and the Hamiltonian equations of a Hamiltonian function
$h(\theta,I,z_1,z_2)$ are
\begin{equation}\label{Ham}
\dot I=-\frac{\partial h}{\partial \theta}\ ,\quad \dot
\theta=\frac{\partial h}{\partial I}\ ,\quad\dot
z_1=-\im\frac{\partial h}{\partial \bar z_1}\  ,\quad\dot z_2=-\im\frac{\partial h}{\partial \bar z_2}\ .
\end{equation}
In this framework the Hamiltonian equation associated with the classical time dependent Hamiltonian function  $h_L$ in \eqref{landaub} is equivalent to the autonomous Hamiltonian system \eqref{Ham} with   $$h=h_0+r_1+r_2$$ and
\begin{align}
\label{h00} h_0&=\omega \cdot I +2B_0|z_1|^2,\\
\label{r1}r_1&= \eps (z_1+\bar z_1)(z_1+\bar z_1-\im(z_2-\bar z_2))f(\theta),\\
\label{r2}r_2&=\frac{\eps^2}{2B_0}(z_1+\bar z_1-\im(z_2-\bar z_2))^2f(\theta)^2.
\end{align}
The proof of Theorem \ref{len.mag} follows a KAM strategy: we want to eliminate the angles in $h$ by canonical changes of variables. This canonical changes of variables will be constructed as time 1 flows, $\Phi^1_\chi$, of some Hamiltonian $\chi$. We begin by computing explicitly the first two KAM steps and then we will be in position to apply a KAM theorem with symmetry, namely Theorem \ref{KAMclassico}. First we construct the first change of variables and we begin by solving a so called homological equation.
\begin{lemma}\label{lem:1}
Let 
\begin{align*}\chi_1:=&\im \eps\sum_{k\in\Z^n\setminus \{0\}}\hat f(k) e^{\im k\cdot \theta}\left(\frac{z_1^2}{\omega\cdot k+4B_0}+\frac{\bar z_1^2}{\omega\cdot k-4B_0}+2\frac{z_1\bar z_1}{\omega\cdot k}\right)\\ 
+ &\eps (z_2-\bar z_2)\sum_{k\in\Z^n\setminus \{0\}}\hat f(k) e^{\im k\cdot \theta}\left(\frac{z_1}{\omega\cdot k+2B_0}+\frac{\bar z_1}{\omega\cdot k-2B_0}\right) \ , 
\end{align*}
then $\chi_1$ solves the following homological equation:
\begin{equation}\label{homo1} \{\chi_1, h_0\}+r_1=0.\end{equation}
\end{lemma}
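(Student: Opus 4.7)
The plan is to verify $\{\chi_1, h_0\} + r_1 = 0$ by direct computation, exploiting the fact that $h_0 = \omega\cdot I + 2B_0|z_1|^2$ is linear in $I$ and quadratic in $(z_1,\bar z_1)$, so that its adjoint action is diagonal on a natural monomial basis. For the Poisson bracket associated with the symplectic form $dI\wedge d\theta + \im dz\wedge d\bar z$, each monomial of the form $m = e^{\im k\cdot\theta} z_1^{a_1}\bar z_1^{a_2} z_2^{b_1} \bar z_2^{b_2}$ is an eigenvector of $\{\cdot, h_0\}$ with eigenvalue $\im(\omega\cdot k + 2B_0(a_2 - a_1))$: the $\omega\cdot k$ factor comes from $\partial_I h_0 = \omega$ paired with $\partial_\theta m = \im k\, m$, while the $(a_2 - a_1)$ factor arises from $\partial_{z_1}h_0 = 2B_0\bar z_1$ and $\partial_{\bar z_1} h_0 = 2B_0 z_1$. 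The exponents $(b_1,b_2)$ play no role since $h_0$ is independent of $(z_2, \bar z_2)$.

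Next I would expand $r_1$ in the same monomial basis: multiplying out $(z_1+\bar z_1)(z_1+\bar z_1-\im(z_2-\bar z_2))$ yields the seven monomials $z_1^2, \bar z_1^2, z_1\bar z_1, z_1 z_2, z_1\bar z_2, \bar z_1 z_2, \bar z_1\bar z_2$, with excess $a_2 - a_1 \in \{-2,-1,0,1,2\}$, and the factor $\eps f(\theta) = \eps\sum_{k\neq 0}\hat f(k) e^{\im k\cdot\theta}$ attaches an $\hat f(k)e^{\im k\cdot \theta}$ Fourier weight to each (note the absence of $k=0$, since $\hat f(0)=0$). For each Fourier--monomial in $r_1$ with coefficient $\lambda$, I take the corresponding coefficient of $\chi_1$ to be $-\lambda$ divided by the eigenvalue $\im(\omega\cdot k + 2B_0(a_2-a_1))$; regrouping the four terms involving $(z_2,\bar z_2)$ into factors of $(z_2-\bar z_2)$ recovers the two blocks displayed in the lemma, and summing over $k$ proves $\{\chi_1, h_0\} + r_1 = 0$.

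The only remaining task is to ensure the divisors are nonzero and that the resulting series converges. The denominators appearing are $\omega\cdot k$ (for $z_1\bar z_1$), $\omega\cdot k\pm 2B_0$ (for the $z_1 z_2, z_1\bar z_2, \bar z_1 z_2, \bar z_1\bar z_2$ terms) and $\omega\cdot k\pm 4B_0$ (for $z_1^2, \bar z_1^2$), for $k\in\Z^n\setminus\{0\}$. The first is controlled by \eqref{diof.2}, the second by \eqref{diof.1} together with the symmetry $k\mapsto -k$, while the $\omega\cdot k\pm 4B_0$ divisors need an extra Diophantine-type exclusion which removes only a measure-zero set and can be absorbed into $\mathcal D_0$. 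The exponential decay of $|\hat f(k)|$ from the analyticity of $f$ comfortably absorbs the polynomial loss from these small divisors, yielding absolute convergence of $\chi_1$ on a complex neighbourhood of $\T^n\times\C^4$. The main subtle point is thus the control of the divisors $\omega\cdot k\pm 4B_0$; once that is secured the lemma reduces to the monomial-by-monomial verification above.
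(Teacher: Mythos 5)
Your method — diagonalizing $\{\cdot,h_0\}$ on the monomial basis $e^{\im k\cdot\theta}z_1^{a_1}\bar z_1^{a_2}z_2^{b_1}\bar z_2^{b_2}$ and dividing by the eigenvalue — is sound and in substance identical to the paper's, which simply packages the same computation via the auxiliary functions $N$ and $M$ (each of which groups the monomials with $\beta_2=0$ and a fixed dependence on $z_2-\bar z_2$). The difference is only one of presentation, not of method.

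There is, however, a sign error in your eigenvalue. With the Poisson bracket \eqref{poisson} of the paper, one finds
\begin{equation*}
\left\{\,e^{\im k\cdot\theta}z_1^{a_1}\bar z_1^{a_2}\cdots,\,h_0\right\}
=\im\bigl(\omega\cdot k+2B_0(a_1-a_2)\bigr)\,e^{\im k\cdot\theta}z_1^{a_1}\bar z_1^{a_2}\cdots,
\end{equation*}
since the complex part of \eqref{poisson} is $\im\bigl(\partial_{z_1}F\,\partial_{\bar z_1}h_0-\partial_{z_1}h_0\,\partial_{\bar z_1}F\bigr)$ and $\partial_{\bar z_1}h_0=2B_0z_1$, $\partial_{z_1}h_0=2B_0\bar z_1$; the sign is $a_1-a_2$, not $a_2-a_1$ as you wrote. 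This matters: your prescription ``coefficient of $\chi_1$ equals $-\lambda$ divided by the eigenvalue'' then assigns, e.g., to $z_1^2$ ($a_1=2,a_2=0$) the denominator $\omega\cdot k-4B_0$, whereas the $\chi_1$ of the lemma carries $\omega\cdot k+4B_0$, and likewise for $\bar z_1^2$ and the mixed terms. As stated, your formula produces a different $\chi_1$ and hence does not verify the lemma; once the sign is corrected the monomial-by-monomial check recovers exactly the $\chi_1$ displayed, and the proof is complete. You can sanity-check the sign by reproducing the paper's identity $\{N,h_0\}=\im(z_1+\bar z_1)f(\theta)$: with $a_1-a_2$ the factors of $\omega\cdot k\pm2B_0$ cancel cleanly; with $a_2-a_1$ they do not.

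Your final paragraph on small divisors is a side issue: the lemma is a formal algebraic identity, and the paper's own proof is purely computational and never discusses convergence. You are right that the denominators $\omega\cdot k\pm4B_0$ are not explicitly excluded by the conditions \eqref{diof.1}--\eqref{diof.2} defining $\D_0$; however these exclusions are built into the set $\mathcal C_\eps$ produced by the subsequent KAM iteration (Proposition \ref{prop:homo1} controls all divisors $\omega\cdot k+\nu_1(\alpha_1-\beta_1)$ with $|\alpha_1-\beta_1|\le 2$). So the observation is correct but not part of what needs proving here.
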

\proof

First  recall that
\begin{equation}\label{poisson}
  \{F,G\}:=\sum_{j=1}^n \frac{\partial  F}{\partial \theta_j}\frac{\partial  G}{\partial I_j}-\frac{\partial  F}{\partial I_j}\frac{\partial  G}{\partial \theta_j}+\im \sum_{j=1,2} \frac{\partial  F}{\partial z_j}\frac{\partial  G}{\partial \bar z_j}-\frac{\partial  G}{\partial z_j}\frac{\partial  F}{\partial \bar z_j} .
\end{equation}
Then we introduce 
$$N(\theta, z_1)=\sum_{k\in\Z^n\setminus \{0\}}\hat f(k) e^{\im k\cdot \theta}\left(\frac{z_1}{\omega\cdot k+2B_0}+\frac{\bar z_1}{\omega\cdot k-2B_0}\right)$$
and
$$M(\theta, z_1)=\sum_{k\in\Z^n\setminus \{0\}}\hat f(k) e^{\im k\cdot \theta}\left(\frac{z_1^2}{\omega\cdot k+4B_0}+\frac{\bar z_1^2}{\omega\cdot k-4B_0}+2\frac{z_1\bar z_1}{\omega\cdot k}\right)$$
in such way we have
$$\chi_1= \im\eps M+\eps(z_2-\bar z_2) N.$$
So, since $h_0$ in \eqref{h00} doesn't depend on $z_2$, we get
$$ \{\chi_1, h_0\}=\im\eps\{M,h_0\}+\eps (z_2-\bar z_2)\{N,h_0\}.$$
Then we compute
\begin{align*}
\{N,h_0\}&=\im \sum_{k\in\Z^n\setminus \{0\}} (k\cdot \omega) \hat f(k) e^{\im k\cdot \theta}\left(\frac{z_1}{\omega\cdot k+2B_0}+\frac{\bar z_1}{\omega\cdot k-2B_0}\right)\\
&+\im\sum_{k\in\Z^n\setminus \{0\}} \hat f(k) e^{\im k\cdot \theta}\left(\frac{2B_0z_1}{\omega\cdot k+2B_0}+\frac{-2B_0\bar z_1}{\omega\cdot k-2B_0}\right)\\
&=\im (z_1+\bar z_1) f(\theta),
\end{align*}
and
\begin{align*}
\{M,h_0\}&= \im \sum_{k\in\Z^n\setminus \{0\}} (k\cdot \omega) \hat f(k) e^{\im k\cdot \theta}\left(\frac{z_1^2}{\omega\cdot k+4B_0}+\frac{\bar z_1^2}{\omega\cdot k-4B_0}+2\frac{z_1\bar z_1}{\omega\cdot k}\right)\\
&+\im \sum_{k\in\Z^n\setminus \{0\}} \hat f(k) e^{\im k\cdot \theta}\left(\frac{4B_0z_1^2}{\omega\cdot k+4B_0}+\frac{-4B_0\bar z_1^2}{\omega\cdot k-4B_0}\right)\\
&=\im (z_1+\bar z_1)^2 f(\theta) \ , 
\end{align*}
from which  \eqref{homo1} follows.
\endproof
 Then since,
 $$\{\chi_1,h_0\}+r_1=0,\quad \{\chi_1,\{\chi_1,h_0\}\}=-\{\chi_1,r_1\},$$
 we get
 \begin{equation}\label{hrondphi}h\circ \Phi^1_{\chi_1}=h+\{\chi_1,h\}+\frac12 \{\chi_1,\{\chi_1,h_0\}\}+O({\eps^3})=  h_0+\frac12 \{\chi_1,r_1\}+r_2+O({\eps^3}).
 \end{equation}
 Next we wish to compute explicitly $\{\chi_1,r_1\}$. In view of the expressions of $\chi_1$ and $r_1$, we first compute 
\begin{align*}\{M,(z_1+\bar z_1)^2\}&=2(z_1+\bar z_1)\{M,z_1+\bar z_1\},\\
\{M,(z_1+\bar z_1)(z_2-\bar z_2)\}&=(z_2-\bar z_2)\{M,z_1+\bar z_1\},\\
\{M,z_1+\bar z_1\}&=2\im\sum_{k\in\Z^n\setminus \{0\}}\hat f(k)e^{ik\cdot \theta}\left( \frac{z_1}{\omega\cdot k+4B_0}-\frac{\bar z_1}{\omega\cdot k-4B_0}+\frac{\bar z_1-z_1}{\omega\cdot k}\right)\\
&=-8\im B_0\sum_{k\in\Z^n\setminus \{0\}}\hat f(k)e^{ik\cdot \theta}\left( \frac{z_1}{\omega\cdot k(\omega\cdot k+4B_0)}+\frac{\bar z_1}{\omega\cdot k(\omega\cdot k-4B_0)}\right)
\end{align*}
 and
\begin{align*}
\{N,(z_1+\bar z_1)^2\}&=2(z_1+\bar z_1)\{N,z_1+\bar z_1\},\\
\{N,(z_1+\bar z_1)(z_2-\bar z_2)\}&=(z_2-\bar z_2)\{N,z_1+\bar z_1\}, \\
\{N,z_1+\bar z_1\}&=-4\im B_0 \sum_{k\in\Z^n\setminus \{0\}} \frac{\hat f(k)e^{\im k\cdot \theta}}{(\omega\cdot k)^2-4B_0^2}.
\end{align*}
Therefore

\begin{align}
\notag
\{\chi_1,r_1\}&=\{\im\eps M+\eps(z_2-\bar z_2) N,\eps (z_1+\bar z_1)(z_1+\bar z_1-\im(z_2-\bar z_2))f(\theta)\}\\
\notag
&=2\im \eps^2f(\theta)(z_1+\bar z_1)\{M,(z_1+\bar z_1)\} +\eps^2(z_2-\bar z_2)f(\theta)\{M,(z_1+\bar z_1)\}\\
\notag
&+2\eps^2f(\theta)(z_2-\bar z_2)(z_1+\bar z_1)
\{N,z_1+\bar z_1\} -\im\eps^2 (z_2-\bar z_2)^2f(\theta)\{N,z_1+\bar z_1\}  \\
\notag
&=\eps^2f(\theta)(2\im(z_1+\bar z_1)+(z_2-\bar z_2))\{M,(z_1+\bar z_1)\}\\
\notag
&-\im \eps^2f(\theta)(z_2-\bar z_2)(2\im(z_1+\bar z_1)+ (z_2-\bar z_2)) \{N,z_1+\bar z_1\}\\
\notag
&=\eps^2f(\theta)(2\im(z_1+\bar z_1)+(z_2-\bar z_2))(\{M,(z_1+\bar z_1)\}-\im(z_2-\bar z_2)\{N,z_1+\bar z_1\})\\
\notag
&=-4\im\eps^2B_0f(\theta)(2\im(z_1+\bar z_1)+(z_2-\bar z_2)) \times\\
\label{chi1r1}
&\times \sum_{k\in\Z^n\setminus \{0\}}\hat f(k)e^{\im k\cdot \theta}\left( \frac{2z_1}{\omega\cdot k(\omega\cdot k+4B_0)}+\frac{2\bar z_1}{\omega\cdot k(\omega\cdot k-4B_0)}-\im \frac{z_2-\bar z_2}{(\omega\cdot k)^2-4B_0^2}  \right).
\end{align}

At the next KAM step we remove from $\frac12 \{ \chi_1, r_1\} + r_2$ all the $\eps^2$ terms except resonant monomials, i.e. in our case, $|z_1|^2$ and $(z_2-\bar z_2)^2$.
In view of the expressions \eqref{chi1r1}, \eqref{r2}, we thus obtain 
 \begin{equation}\label{h2}h_2=h\circ \Phi^1_{\chi_1}\circ \Phi^1_{\chi_2}= h_0+a_\omega\eps^2|z_1|^2+c_\omega\eps^2(z_2-\bar z_2)^2+O({\eps^3})\end{equation}
 where 
 \begin{align*}c_\omega&= {-} \sum_{k\in\Z^n\setminus \{0\}}\hat f(k)\hat f(-k)\left(\frac{2B_0}{(\omega\cdot k)^2-4B_0^2}  \right)-\frac1{2B_0}\langle f^2\rangle\\
 &={-} \sum_{k\in\Z^n\setminus \{0\}}\hat f(k)\hat f(-k)\left(\frac{2B_0}{(\omega\cdot k)^2-4B_0^2}{+} \frac1{2B_0}  \right)\\
 &= -\frac1{2B_0}\sum_{k\in\Z^n\setminus \{0\}}\hat f(k)\hat f(-k) \frac{{(\omega\cdot k)^2}}{(\omega\cdot k)^2-4B_0^2}  
 \end{align*}
 as stated in \eqref{c} while
  \begin{align*}a_\omega&=8 B_0\sum_{k\in\Z^n\setminus \{0\}}\hat f(k)\hat f(-k)\left(\frac{1}{(\omega\cdot k)((\omega\cdot k)+4B_0)}+\frac{1}{(\omega\cdot k)((\omega\cdot k)-4B_0)}  \right)+\frac1{B_0}\langle f^2\rangle\\
 &=\sum_{k\in\Z^n\setminus \{0\}}\hat f(k)\hat f(-k)\left(\frac{16 B_0}{(\omega\cdot k)^2-16B_0^2}+\frac1{B_0}  \right)\\
 &= \frac 1{B_0}\sum_{k\in\Z^n\setminus \{0\}}\hat f(k)\hat f(-k) \frac{(\omega\cdot k)^2}{(\omega\cdot k)^2-16B_0^2}  .
 \end{align*}

 To end the proof of Theorem \ref{len.mag} we just have to iterate this KAM step and to prove the convergence of such a process. In particular we will check  that we only remain with resonant monomials, i.e. in our case, $|z_1|^2$ and $(z_2-\bar z_2)^2$.\\
 {Concretely Theorem \ref{len.mag} is obtained by applying Theorem
  \ref{KAMclassico} to the Hamiltonian \eqref{h2} taking
  $\nu_1:=2B_0+a_\omega \eps^2$, $c_0:=c_\omega \eps^2$ and
  $\eps_0:=\eps^3$.} 
 We stress out that the difference between the estimate \eqref{nuc0} in Theorem \ref{KAMclassico} and the estimate \eqref{bc} in Theorem \ref{len.mag}  is due to the specific form  of $r$ in \eqref{r1} and the fact that $\hat f(0)=0$.
 
 \subsection{Proof of Theorem \ref{len.mag.sym}} 
 We still work in the same framework as in section \ref{proofLand}, i.e. in the extended phase space in which we add the angles
$\theta\in\T^n$ as new variables and their conjugated momenta
$I\in\R^n$. So our phase space is still $\T^n\times \R^n\times \C^2\ni (\theta,I,z_1,z_2)$ (see \eqref{Ham}). 

In this framework the Hamiltonian equation associated with the classical time dependent Hamiltonian function \eqref{landausym} $h_{sL}(t)$ is equivalent to the autonomous Hamiltonian system \eqref{Ham} with   $h=h_0+r$ where
$$h_0=\omega \cdot I +2B_0|z_1|^2$$
and $r=r_{sL}(\theta)$. To compute explicitly this term, recall that, in the coordinates introduced in section \ref{symgauge}, 
\begin{align*}x_2&=x'_2=\frac1{\sqrt{B_0}}(y_1-y_2)=\frac1{\sqrt{2B_0}}(z_1+\bar z_1-(z_2+\bar z_2)),\\
x_1&=p'_1=\frac1{\sqrt{B_0}}(\eta_1+\eta_2)=\frac1{\im\sqrt{2B_0}}(z_1-\bar z_1+(z_2-\bar z_2)),\\
p_1-\frac{B_0}2x_2&=-x'_1-\frac{B_0}2 x'_2=-\sqrt{B_0}y_1=-\frac{\sqrt{B_0}}{\sqrt2}(z_1+\bar z_1),\\
p_2+\frac{B_0}2x_1&=p_2-\frac{B_0}2 \xi_1=\sqrt{B_0}\eta_1=\frac{\sqrt{B_0}}{\im\sqrt2}(z_1-\bar z_1). 
\end{align*}
Therefore $r(\theta)$ reads 
\begin{align*}r(\theta)&=\eps f(\theta)(y_1(y_1-y_2)+\eta_1(\eta_1+\eta_2))+\frac{\eps^2}{4B_0} f(\theta)^2((y_1-y_2)^2+(\eta_1+\eta_2)^2)\\
&=\frac{\eps}{2}f(\theta)((z_1+\bar z_1)(z_1+\bar z_1-(z_2+\bar z_2))-(z_1-\bar z_1)(z_1-\bar z_1+(z_2-\bar z_2)))\\
&+\frac{\eps^2}{8B_0} f(\theta)^2( (z_1+\bar z_1-(z_2+\bar z_2))^2-  (z_1-\bar z_1+(z_2-\bar z_2))^2)\\
&=\eps r_1+\eps^2 r_2.
\end{align*}
We follow the same strategy than in the previous section and we are interested by the quadratic terms in $z_2$, $\bar z_2$ after the second KAM step. At the first step (at order $\eps$) we don't have such term (in $r_1$) and thus we eliminate all the  terms of order $\eps$ by a symplectic change of variables $\Phi_{\chi_1}^1$ where $\chi_1$ is the solution of the homological equation.
$$
\{\chi_1,h_{sL}\}=-r_1 \ . 
$$
Since 
$$r_1=-\eps f(\theta)(z_1z_2+\bar z_1\bar z_2)+\text{ quadratic terms in }z_1,\ \bar z_1$$
we take (with $\nu=2B_0$)
\begin{align*}\chi_1&=-\im\eps\sum_{k\neq0}\hat f(k) e^{ik\cdot \theta}\left( \frac{z_2 z_1}{k\cdot\omega +\nu} +\frac{\bar z_2 \bar z_1}{k\cdot\omega -\nu} \right)+\text{ quadratic terms in }z_1,\ \bar z_1.
\end{align*}
Then, using \eqref{hrondphi}, the quadratic terms in $z_2$, $\bar z_2$ after the second KAM step come from the quadratic terms in $z_2$, $\bar z_2$ in $\frac12\{\chi_1,r_1\}$ and in $r_2$.\\
From $r_2$ we get 
$$\frac{\eps^2 f(\theta)^2}{4B_0}z_2\bar z_2$$
and from $\frac12\{\chi_1,r_1\}$ we get
$$-\frac12\eps^2f(\theta) \sum_{k\neq0}\hat f(k)e^{\im k\cdot\theta}\left( \frac{\bar z_2 z_2}{k\cdot\omega +\nu} -\frac{ z_2 \bar z_2}{k\cdot\omega -\nu} \right).$$
Now the second KAM step will eliminate all the  $e^{\im k\cdot\theta}z_2\bar z_2$ for $k\neq0$ and therefore we obtain like in \eqref{h2}
\begin{equation}\label{hs2}h_2=h\circ \Phi^1_{\chi_1}\circ \Phi^1_{\chi_2}= \omega \cdot I +(2B_0+a\eps^2)|z_1|^2+d_\omega\eps^2|z_2|^2+O(\eps^3)\end{equation}
where
\begin{align*}d_\omega&=\frac1{4B_0}\sum_{\ell}\hat f(\ell)\hat f(-\ell)+\sum_{\ell\neq0} \hat f(\ell)\hat f(-\ell)\frac{2B_0}{(\ell\cdot\omega)^2-4B_0^2}\\&=\frac1{4B_0}\sum_{\ell\neq0} \hat f(\ell)\hat f(-\ell)\frac{(\ell\cdot\omega)^2+4B_0^2}{(\ell\cdot\omega)^2-4B_0^2}\end{align*}
as stated in \eqref{d}.\\
Then, if $d_\omega\neq0$, $h_2$ appears as a perturbation of the non degenerate Hamiltonian $(2B_0+a\eps^2)|z_1|^2+d_\omega\eps^2|z_2|^2$ and we can apply Theorem \ref{KAMclassico1}.

 \section{Proofs of two  reducibility Theorems} \label{KAM}
 In this section we prove the two reducibility theorems that we need
 to conclude the proofs of Theorem \ref{len.mag} and Theorem
 \ref{len.mag.sym}.

First, it is usefull to change the notation in order to make clear
that the variable $\bar z$ is not necessarily the complex conjugate of
$z$: we only have to define the concept of real submanifold of the phase space and it depends on the variables we use (see Definition \ref{reali} and Remark
\ref{reali.2} below for more details).

So, first of all we define
\begin{equation}
    \label{inivar}
\csi_j:=z_j\ ,\quad \eta_j:=\bar z_j\ ,\quad j=1,2\ .
  \end{equation}

\begin{definition}
  \label{reali}
  For $(\csi,\heta)\in C^4$, define the involution
  \begin{equation}
    \label{reali.eq.1}
(\csi,\heta)\mapsto I(\csi,\heta)=(\bar \heta,\bar\csi)\ ,
  \end{equation}
the states $(\csi,\heta)$ s.t.   $(\csi,\heta)=I(\csi,\heta)$ will be
said to be \emph{real}.
\end{definition}
\begin{remark}
  \label{ham.real}
The Hamiltonians we are dealing with are real when $(\csi,\heta)$ is real.
\end{remark}

In order to deal with the Hamiltonian \eqref{h2} whose expansion up to order $\eps^2$  only depends on $z_1$, $\bar z_1$ and $z_2-\bar z_2$, it is useful to introduce the following canonical change of variables
\begin{equation}
   \label{ambio}
\csi'_2:=\csi_2-\heta_2\ ,\quad \heta_2':=\heta_2\ .
\end{equation}
In these variables (and omitting primes) \eqref{h2} reads
 \begin{equation}\label{h2bis}h_2= \omega \cdot I +(2B_0+a_\omega\eps^2)\xi_1\eta_1+c_\omega\eps^2\xi_2^2+O({\eps^3}).\end{equation}
\begin{remark}
  \label{reali.2}
in these variables the involution $I$ takes the form
(omitting the primes)
\begin{equation}
  \label{new.invo}
(\csi_1,\heta_1,\csi_2,\heta_2)\mapsto I(\csi_1,\heta_1,\csi_2,\heta_2
)\equiv (\bar\heta_1,\bar \csi_1, \bar \csi_2,\overline{\csi_2-\heta_2})
  \end{equation}
  and the real submanilfold reads: $\eta_1=\bar \xi_1$, $\xi_2$ is real and $2\Re( \eta_2)=\xi_2$.\\
In all the situations we will encounter the Hamiltonian is
independent of $\heta_2$, therefore the fact that the reality condition 
becomes more complicate will be completely irrelevant.
Of course, in the following a Hamiltonian  expressed in the
variables \eqref{ambio} will be said to be real if it takes  real values  for real 
$(\csi,\heta)$, i.e. for $(\xi,\eta)$ which are fixed points with respect to the involution \eqref{new.invo}.
\end{remark}

The first theorem we will prove  concerns  quasi-periodic in time Hamiltonians of the form
\be \label{ham.cla}h_{\eps_0}(t,\csi, \heta)= \nu_1 \csi_1\heta_1 {+c_0\xi_2^2+\eps_0} q(\om
t,\csi_1,\heta_1,\csi_2)\ee
where  $q$ is a polynomial in $(\csi_1,\heta_1,\csi_2)$ homogeneous of
degree 2 with coefficients that depend quasi periodically on time. The important point is that $q$ does not depend on $\heta_2$. \\
In the following we denote, for $\sigma >0$,  $\T^n_\s {:=\{x+\im y \colon x \in \T^n,
  \, y\in \R^n , \, |y| \leq \s \}}$.

\begin{theorem}
\label{KAMclassico} Assume that $\nu_1>0$  and that $
\T^n\times\C^{4}\ni(\theta,\csi)\mapsto q(\theta,\csi)\in\C$ is a polynomial in
$(\csi_1,\heta_1,\csi_2)$ homogeneous of degree 2, independent of $\eta_2$, with coefficients  real analytic in $\theta\in \T_\s^n$ for some
$\s>0$ (i.e. real when $(\csi,\heta)$ is real and $\theta\in\T^n$)
\\
Then there exists $\eps_*>0$ and $C>0$, such that for $|\eps_0|<\eps_*$:\\
- there
exists a set $\mathcal E_{\eps_0}\subset (0,2\pi]^n$ 
with $\meas ( (0,2\pi]^n\setminus \mathcal E_{\eps_0})\leq C \eps_0^{\frac 19}$, \\
- for any $\om\in\mathcal E_{\eps_0}$, there exists an analytic map $\theta\mapsto
 A_{\om}(\theta) \in {\rm sp}(2) $,  
 such that the change of coordinates
 \begin{equation}
   \label{kamclach}
(\csi',\heta')=e^{A_{\om}(\omega t)}(\csi,\heta) 
 \end{equation}
conjugates the Hamiltonian equations of \eqref{ham.cla} to the Hamiltonian equations of a homogeneous polynomial 
$$h_\infty(\csi, \heta)= \nu_1(\eps_0) \csi_1\heta_1 + c(\eps_0)  \csi_2^2$$ 
with
\be\label{nuc0}|\nu_1(\eps_0)-\nu_1|\leq C\eps_0, \quad
|c(\eps_0)-c_0|\leq \eps_0.
\ee
 Finally  $A_{\om}$
is $\eps_0$-close to zero and $e^{A_{\om}(\omega t)} $ leaves invariant
the space of real states.
\end{theorem}

The second reducibility theorem deals with 
Hamiltonians of the form (in the original variables \eqref{inivar})
\be \label{ham.cla.2}h_\eps(t,\csi, \heta)= \nu_1 \csi_1\heta_1
+ \nu_2 \csi_2\heta_2+\eps^3 q(\om
t,\csi_1,\heta_1,\csi_2,\heta_2)\ee
in which one has that the frequencies depend on $\omega\in\D$ and $\eps$ and
fulfill 
\begin{equation}
  \label{nu2}
c\leq |\nu_1(\omega)|\leq C,\quad c \eps^2 \leq |\nu_2(\omega)| \leq C \eps^2 , \qquad  \left|\partial_{\omega}\nu_1\right|,\left|\partial_{\omega}\nu_2\right|\leq
C\eps^2\ ,
  \end{equation}
with some positive $c, C$. So it is designed to deal with Hamiltonian \eqref{hs2}. The difference with the standard KAM context is that the second frequency is of order $\eps^2$ while the perturbation is of order $\eps^3$. The following theorem says that the standard conclusion still holds true, i.e. that the inhomogeneous Hamiltonian system associated with \eqref{ham.cla.2} is reducible for almost all values of $\omega$:
\begin{theorem}
\label{KAMclassico1} Assume \eqref{nu2}  and that $
\T^n\times\C^{4}\ni(\theta,\csi,\heta)\mapsto q(\theta,\csi,\heta)\in\C$
is a polynomial in $(\csi_1,\heta_1,\csi_2,\heta_2)$ homogeneous of degree 2, with coefficients analytic in $\theta\in
\T_\s^n$ for some $\s>0$ and taking real values when
$\theta\in\T^n$ and $\heta$ is the complex conjugate of $\csi$.
\\ Then there exists $\eps_*>0$ and $C>0$, such that for
$|\eps|<\eps_*$:\\ - there exists a set $\mathcal E_\eps\subset
(0,2\pi]^n$ with $\meas ( (0,2\pi]^n\setminus \mathcal E_\eps)\leq C
    \eps^{\frac 19}$, \\ - for any $\om\in\mathcal E_\eps$, there
    exists an analytic map $\theta\mapsto A_{\om}(\theta) \in {\rm
      sp}(2) $ such that the change of coordinates
 \begin{equation}
   \label{kamclach.1}
(\csi',\heta')=e^{A_{\om}(\omega t)}(\csi,\heta) 
 \end{equation}
conjugates the Hamiltonian equations of \eqref{ham.cla} to the Hamiltonian equations of a homogeneous polynomial 
$$h_\infty(\csi, \heta)= \tilde \nu_1(\eps) \csi_1\heta_1 + \tilde \nu_2(\eps)
\csi_2\heta_2 $$ 
with
\be\label{nuc}|\tilde \nu_j(\eps)-\nu_j|\leq C\eps^3, \quad j=1,2\ .\ee
 Finally  $A_{\om}$
is $\eps$-close to zero.
\end{theorem}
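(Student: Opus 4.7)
The strategy is to run a KAM iteration in the spirit of Theorem \ref{KAMclassico}, but with the essential new feature that the second normal frequency $\nu_2$ is of size $\eps^2$, comparable to the smallest small divisors. Since $q$ is homogeneous of degree $2$ in $(\csi,\heta)$, I would look for the generating function at each step in the form of a trigonometric polynomial quadratic in $(\csi,\heta)$, so that the homological equation separates, mode by mode, into scalar equations whose denominators are of the form
$$
\om\cdot k+\sigma_1\nu_1+\sigma_2\nu_2,\qquad (\sigma_1,\sigma_2)\in\{0,\pm 1,\pm 2\}^2,\ |\sigma_1|+|\sigma_2|\leq 2,
$$
with $(k,\sigma_1,\sigma_2)\neq(0,0,0)$ and $(\sigma_1,\sigma_2)\neq(0,0)$ in the non-resonant case. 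The resonant part, consisting of $\csi_j\heta_j$, $j=1,2$, is left untouched and at each step corrects $\nu_1,\nu_2$ into new frequencies $\nu_j^{(n)}$ that I would prove still satisfy \eqref{nu2}: indeed each Birkhoff correction has size at most $O(\eps^3)$, so $|\nu_2^{(n)}-\nu_2|\leq C\eps^3\ll|\nu_2|$ and similarly for $\nu_1$, and $|\partial_{\om}\nu_j^{(n)}|$ remains $O(\eps^2)$.

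The main task is to control the small divisors on a large set of frequencies $\om$. For $k=0$ one has $|\sigma_1\nu_1+\sigma_2\nu_2|\geq c\eps^2$ by the hypothesis \eqref{nu2} (and the iterative control above), without excising anything. For $k\neq 0$ I would impose, at the $n$-th step, the second-order Melnikov conditions
$$
\bigl|\om\cdot k+\sigma_1\nu_1^{(n)}(\om)+\sigma_2\nu_2^{(n)}(\om)\bigr|\geq\frac{\gamma_n}{1+|k|^\tau},
$$
with $\gamma_n$ decreasing but summable. Because $|\partial_\om \nu_j^{(n)}|\leq C\eps^2$, the $\om$-derivative of the divisor is $k+O(\eps^2)$ and a standard transversality argument bounds the measure of the bad set at mode $k$ at step $n$ by $C\gamma_n|k|^{-\tau-1}$; summing over $k$ and $n$ produces a total excised set of measure $O(\eps^{1/9})$, as in Theorem \ref{KAMclassico}.

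The quadratic Newton scheme improves the perturbation size according to
$$
\delta_{n+1}\lesssim \frac{\delta_n^2}{\gamma_n\,\eps^2},
$$
where the factor $\eps^{-2}$ comes from the worst small divisor, the $k=0$, $\sigma_1=0$, $\sigma_2\neq 0$ case, which is only $\geq c\eps^2$. Starting from $\delta_0=\eps^3$, the first step produces $\delta_1\lesssim \eps^4/\gamma_0$, which is still much smaller than $\eps^3$ for $\eps$ small, and from then on the doubling of the exponent dominates the $\eps^{-2}$ loss and delivers superexponential convergence. Taking $A_\om(\theta)$ to be the sum of the generating functions produced at each step (each of which is, as in Theorem \ref{KAMclassico}, an element of $\mathrm{sp}(2)$) gives the desired linear symplectic conjugation, and preservation of reality at each step follows from the fact that the homological equation commutes with the involution $I$, so each generating function is real and $e^{A_\om(\om t)}$ leaves the real submanifold invariant.

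The main obstacle is precisely the near-degeneracy between $\nu_2=O(\eps^2)$ and the perturbation $O(\eps^3)$: a naive KAM scheme in which the perturbation is of the same order as the frequency would not converge. What saves the argument is the extra factor $\eps$ separating perturbation from frequency, plus the fact that after a single step the perturbation is already $O(\eps^4)$, leaving ample room for the Newton iteration to take over. A secondary, more bookkeeping-type, obstacle is verifying that the second-order Melnikov conditions can be propagated along the iteration with frequencies $\nu_j^{(n)}$ that move at each step; this is handled, as usual, by imposing the conditions on the final frequencies and using $|\nu_j^{(n)}-\nu_j^{(\infty)}|\ll\gamma_n$ to transfer them back to the $n$-th step.
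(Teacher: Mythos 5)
Your proposal follows essentially the same route as the paper: both treat the problem as a modification of Theorem \ref{KAMclassico} in which the only new ingredient is the small divisor $2\nu_2 = O(\eps^2)$ arising at $k=0$, control the homological equation with a denominator of size $\min\{\eps^2,\kappa^2\}$ (including the $\omega$-derivative cost $|\partial_\omega\nu_2/\nu_2^2|\lesssim\eps^{-2}$), and observe that since the perturbation starts at $O(\eps^3)$ the extra $\eps^{-2}$ loss is harmless and the quadratic scheme takes over after finitely many steps. The paper packages this as a modified Proposition \ref{prop:homo1} plus a remark on the first few steps of the iterative lemma, while you state it as a recursion $\delta_{n+1}\lesssim\delta_n^2/(\gamma_n\eps^2)$; these are the same argument in slightly different bookkeeping.
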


In the remainder of this section we give the details of the proof of Theorem
\ref{KAMclassico} while we only point out the small changes needed to
prove Theorem \ref{KAMclassico1}.

In fact, the proof of Theorem
\ref{KAMclassico} is very standard, but since
we're dealing with a degenerate Hamiltonian $h_0$, we have to be a
little careful. The fact that the perturbation $q$ is independent of 
$\heta_2$ is crucial here. If not, the Poisson bracket
$\{h,\chi\}$ could generate new quadratic terms in $(\csi_2,\heta_2)$ and the iteration could diverge.

\subsection{General strategy}
The canonical change of variables is constructed applying a KAM strategy to the  Hamiltonian 
$$h(y,\theta,\csi,\heta)= \om\cdot y+ \nu_0\csi_1\heta_1+\eps q(\om
t,\csi_1,\heta_1,\csi_2)$$
in the extended phase space $\R^n\times\T^n\times \C^{2}$ endowed with the standard symplectic form $dy\wedge d\theta+\im d\csi\wedge d\heta$. \\
We will say that the   Hamiltonian $h$ is in {\it normal form} if it reads
\be\label{h}
h(y,\theta,\csi,\heta)= \om\cdot y +a \csi_1\heta_1 + c \csi_2^2=\om\cdot y +N(\csi)\ee
where $a$ and $c$ are real constants (independent of $\theta$).

Let
 $q\equiv q_\om$ be a polynomial Hamiltonian homogeneous of degree 2. We write 
 \begin{equation}\label{q.form}
   \begin{aligned} q(\theta,\csi,\eta)&=\sum_{\alpha,\beta}q_{\alpha,\beta}(\theta) \csi^\alpha\heta^\beta
\end{aligned}
 \end{equation}
where the coefficients $q_{\alpha,\beta}(\theta)$ are analytic
functions of $\theta\in\T^n_\s$, we used the standard notation
$$
\csi^\alpha\heta^\beta:=\csi_1^{\alpha_1}\csi_2^{\alpha_2}\eta_1^{\beta_1}\heta_2^{\beta_2}
$$
and according to the independence on $\heta_2$ and the fact that this
must be a quadratic polynomial one has the restrictions
\begin{equation}
  \label{restri}
\beta_2=0\ ,\quad \left|\alpha\right|+|\beta|=2\ .
\end{equation}

The size of such polynomial function depending analytically on $\theta\in\T^n_\s$ and $C^1$ on $\om\in\D \subseteq (0,2\pi]^n$ will be controlled by the norm
 $$[q]_{\s,\D}:=\sup_{\substack{|\Im \theta|<\s,\ \omega \in
        \D \\\alpha,\beta,\ j=0,1}} |\p_\om^j q_{\alpha,\beta}(\theta)|$$
and we denote  by $\cQ(\s,\D)$ the corresponding class of Hamiltonians
of the form \eqref{q.form}-\eqref{restri} whose norm $[\cdot]_{\s,\D}$ is finite.
\\
Let us assume that $[q]_{\s,\D}=O (\eps)$. We search for $\chi\equiv \chi_\om\in  \cQ(\s,\D)$ with $\chi=O (\eps)$ such that its  time-one flow 
$\Phi_\chi\equiv \Phi_\chi^{t=1}$ transforms the Hamiltonian $h+ q$ into
\begin{equation}
\label{conj.app}
(h+ q(\theta))\circ \Phi_\chi=h_++ q_+(\theta), \qquad  \forall \omega \in \D_+ \ , 
\end{equation}
where $h_+=\om\cdot y + {N_+(\csi)}$ is a new normal form, $\eps$-close to $h$, and the new perturbation $q_+\in\mathcal Q(\s_+, {\D_+})$  is of size\footnote{Formally we could expect $q_+$ to be of size $O(\eps^2)$ but the small divisors and the change of analyticity domain will led to $O (\eps^{\frac 32})$. } $O (\eps^{\frac 32})$, 
{and $\D_+ \subset \D$ is an open set $\eps^\alpha$-close to $\D$ for some $\alpha >0$.}
As a consequence of the Hamiltonian structure we have  that
$$(h+ q(\theta))\circ \Phi_\chi= h+\{ h,\chi \}+q(\theta)+ O (\eps^{\frac 32}).$$
So to achieve the goal above 
we should  solve the {\it homological equation}:
\be \label{eq-hom}
\{ h,\chi \}= h_+-h -q(\theta)+O (\eps^{\frac 32}) , \quad {\omega \in \D_+} \ .
\ee
Repeating iteratively 
the same procedure with $h_+$ instead of $h$, we will construct a change of variable $\Phi$ such that
{
$$
(h+ q(\theta))\circ \Phi=h_\infty\,, \quad \omega \in \D_\infty
$$
with $h_\infty = \om\cdot y +N(\csi,\heta)$ in normal form and
$\D_\infty$ a $\eps^\alpha$-close subset of $\D$.}
 Note that  we will be forced to solve the homological equation, not
only for the original normal form $h_0=\om\cdot y +\nu \csi_1\heta_1$, but for 
more general normal form Hamiltonians
\eqref{h} with $N(\csi)=a \csi_1\heta_1 + c \csi_2^2$  close to $N_0=\nu \csi_1\heta_1$. To control this closeness we define a norm on $N=a \csi_1\heta_1 + c \csi_2^2$:
$$\| N\|:=\max(|a|,|c|).$$
The key remark is that $\forall q\in \mathcal Q(\sigma,\D)$ one has
$\left\{\csi_2;q\right\}\equiv 0$.

\subsection{Homological equation}\label{section-homo}

\begin{proposition}\label{prop:homo1}
Let $\D\subset\D_0$. Let $\D\ni\om\mapsto N(\om)$ be a $C^1$ mapping that  verifies 
\be\label{ass}
 \left\| \p_\om^j (N(\om)-N_0) \right\| \leq {\frac{\min(1,\nu_0)}{4}}
 \ee
for $j=0,1$ and $\om\in \D$.
Let $h=\om\cdot y+N(\csi)$, $q\in\mathcal Q(\s,\D)$ , $\ka>0$ and $K\ge 1$. \\
 Then there exists an open subset $\D'=\D'( \ka,K)\subset \D$, satisfying 
 \be\label{estim:D}\meas (\D\setminus \D')\leq  
 4d^2K^{2n}\ka, \ee
and there exist $\chi, r \in\cap_{0\leq\s'<\s}\mathcal Q(\s',\D')$ 
and $\tilde N$ in normal form such that for all $\om\in\D'$
\be\label{ho}
\{h,\chi \}+q=\tilde N+r\ .
\ee 
Furthermore {for all $0\leq \sigma' < \sigma$} 
 \begin{align}
\label{estim-homoR}
[r]_{\s',\D'}&\leq  C\ \frac{e^{-\frac12 (\s-\s')K}}{ (\s-\s')^{n}}
[q]_{\s,\D}\,,\\
\label{estim-homoS}
[\chi]_{\s',\D'}&\leq \frac{C}{\ka^2 (\s-\s')^{n}}
[q]_{\s,\D}\,,\\
 \label{estim-homoN}
 \|\p_\om^j \tilde N(\om)\|&\leq   [q]_{\s,\D}\quad j=0,1,\  \forall \om\in\D.
 \end{align} 
The constant $C$  depends on $n$.
\end{proposition}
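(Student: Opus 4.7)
My approach is the standard Fourier-division scheme for a homological equation, the one observation to keep in mind being that every $\chi\in\mathcal Q(\sigma,\mathcal D)$ is independent of $\eta_2$, so that $\{\xi_2^2,\chi\}\equiv 0$; consequently the small coefficient $c$ in $N$ never appears in a denominator and the degeneracy of the second frequency plays no role.

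First I would expand $q(\theta,\xi,\eta)=\sum_{k\in\Z^n}\hat q_k(\xi,\eta)\,e^{\im k\cdot\theta}$, each $\hat q_k$ being a homogeneous quadratic in $(\xi_1,\eta_1,\xi_2)$ subject to $\beta_2=0$, and define the remainder by Fourier truncation,
\[
r:=\sum_{|k|>K}\hat q_k(\xi,\eta)\,e^{\im k\cdot\theta}.
\]
The exponential decay of the Fourier coefficients of functions analytic on $\T^n_\sigma$, combined with the standard device of splitting the gap $\sigma-\sigma'$ into two halves (one for the exponential factor $e^{-(\sigma-\sigma')K/2}$, the other for the polynomial loss $(\sigma-\sigma')^{-n}$), yields \eqref{estim-homoR}. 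For the low modes I write $\chi=\sum_{|k|\le K,(\alpha,\beta)}\chi_{k,\alpha,\beta}\,e^{\im k\cdot\theta}\xi^\alpha\eta^\beta$ and use the structural identity together with $\{a\xi_1\eta_1,\xi^\alpha\eta^\beta\}=\im a(\beta_1-\alpha_1)\xi^\alpha\eta^\beta$ to reduce \eqref{ho} to the scalar equations
\[
\im\bigl(a(\omega)(\beta_1-\alpha_1)-\omega\cdot k\bigr)\chi_{k,\alpha,\beta}=\tilde N_{\alpha,\beta}\,\delta_{k,0}-q_{k,\alpha,\beta}.
\]
The only resonant indices are $k=0$ with $\xi^\alpha\eta^\beta\in\{\xi_1\eta_1,\,\xi_2^2\}$; their coefficients $q_{0,\alpha,\beta}$ are placed into $\tilde N$, which gives \eqref{estim-homoN} immediately.

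To define $\mathcal D'$ I exclude from $\mathcal D$ the $\omega$'s at which some non-resonant divisor $|a(\omega)\ell-\omega\cdot k|$, $|k|\le K$, $\ell\in\{0,\pm 1,\pm 2\}$, drops below $\kappa$. Hypothesis \eqref{ass} gives $|\partial_\omega a|\le\min(1,\nu_0)/4$, so for $|k|\ge 1$ the map $\omega\mapsto a(\omega)\ell-\omega\cdot k$ has gradient of norm $\ge|k|/2$; a one-dimensional slicing in the direction $k/|k|$ yields a per-index excluded measure of order $\kappa/|k|$, while for $k=0$ and $\ell\neq 0$ the divisor is close to $\nu_0\ell\neq 0$ and no exclusion is needed. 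Summing over the at most $CK^n$ pairs $(k,(\alpha,\beta))$ and the finitely many signatures $\ell$ produces \eqref{estim:D}, the power $K^{2n}$ being a convenient crude majorization. Finally \eqref{estim-homoS} is the classical small-divisor bound: one factor $\kappa^{-1}$ from the division itself and a second from $\partial_\omega$ hitting the denominator (producing $(a\ell-\omega\cdot k)^{-2}$), together with the $(\sigma-\sigma')^{-n}$ loss intrinsic to the Fourier sum. The only conceptual danger—small divisors involving the small coefficient $c$—is eliminated from the outset by the identity $\{\xi_2^2,\chi\}\equiv 0$, and this is precisely what makes the degenerate direction manageable on the same footing as the non-degenerate one.
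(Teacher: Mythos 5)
Your proposal is correct and follows essentially the same route as the paper's own proof: Fourier truncation for $r$, division by the eigenvalues of the homological operator on monomials $\xi^\alpha\eta^\beta e^{\im k\cdot\theta}$ for $\chi$, the $k=0$ resonant coefficients of $\xi_1\eta_1$ and $\xi_2^2$ for $\tilde N$, a slicing-in-direction-$k/|k|$ measure argument for $\D'$, and the crucial structural observation that $\{\xi_2^2,\chi\}\equiv 0$ for $\chi$ independent of $\eta_2$, so that $c$ never enters a denominator. Both the $\kappa^{-2}$ (one power from division, one from $\partial_\omega$ on the denominator) and the $(\sigma-\sigma')^{-n}$ losses are accounted for exactly as in the paper.
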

\proof As usual we consider the "homological operator"
$\cL:=\left\{h;.\right\}$ and decompose the space $\mathcal Q(\sigma,\D)$ on the basis of
its eigenfunctions. Such a basis is given by the monomials
$$
\csi^\alpha\heta^\beta e^{\im k\theta}\ ,
$$
where $\alpha$ and $\beta$ are subject to the restrictions
\eqref{restri}. The corresponding eigenvalues are
\begin{equation}
  \label{eigen.1}
\im (\nu_1(\alpha_1-\beta_1)+\omega\cdot k)\ ,
\end{equation}
while Ker$(\cL)=span\left\{\csi_2^2,\csi_1\heta_1\right\}$. 
So, decomposing $q$ as in \eqref{q.form}, and expand the coefficients
in Fourier series:
$$
q_{\alpha,\beta}(\theta)=\sum_{k\in \Z^n}\hat
q_{\alpha,\beta}(k)e^{\im k\theta}\ ,
$$
then one is lead to define
\begin{equation}
  \label{chiab}
\chi_{\alpha,\beta}(\theta)=\sum_{|k|\leq K}\frac{\hat
q_{\alpha,\beta}(k)}{\im (\nu_1(\alpha_1-\beta_1)+\omega\cdot k)}e^{ik\theta}\ ,
\end{equation}
where, for $\alpha=(1,0),\beta=(1,0)$ and for $\alpha=(0,0)$ and
$\beta=(2,0)$ the sum is restricted to $k\not=0$.
We also define
\begin{align}
  \label{gli altri}
  \tilde N:=\hat q_{1,0,1,0}(0)\csi_1\heta_1+ \hat q_{0,2,0,0}(0)\csi_2^2\ ,
  \\
r(\csi,\heta,\theta):=\sum_{|k|>K,\alpha,\beta} \hat
q_{\alpha,\beta}(k)\csi^\alpha\heta^\beta e ^{ik\theta}\ ,  
\end{align}
so that the homological equation is satisfied. Still it remains to
prove the estimates of the various terms. We give explicitly the
estimate of $\chi$. To this end we have to control the small
denominators \eqref{eigen.1} under the restriction
\begin{equation}
  \label{restri.2}
\left|\alpha_1-\beta_1\right|+|k|\not =0\ ,\quad |k|\leq K\ ,\quad
\alpha_1+\beta_1\leq 2\ .
\end{equation}
We define $\D'$ to be the set for which the above small
denominators are bigger than $\kappa$. In order to estimate its
measure 
we  recall the following classical lemma:
\begin{lemma}\label{lem-mes}Let $f:[0,1]\mapsto\R$ be a $C^1$-map satisfying $|f'(x)|\geq \delta$ for all $x\in[0,1]$ and let $\ka>0$. Then
$$\meas\{x\in[0,1]\mid |f(x)|\leq \ka\}\leq \frac{\ka}{\delta}.$$
\end{lemma}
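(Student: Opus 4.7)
The plan is to deduce the measure estimate directly from the fact that the lower bound on $|f'|$ forces $f$ to be strictly monotonic on $[0,1]$, so that the sublevel set $\{|f|\leq \ka\}$ is an interval whose length can be controlled by evaluating $f$ at its endpoints.

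First I would note that $f'$ is continuous on $[0,1]$ and never zero, so by the intermediate value theorem it has constant sign. Replacing $f$ by $-f$ if necessary (this preserves both hypothesis and conclusion), one may assume $f'(x)\geq \delta$ throughout, in which case $f$ is strictly increasing. Then $E:=\{x\in[0,1]\mid |f(x)|\leq \ka\}$ is an interval (possibly empty), say $E=[a,b]$. The mean value theorem yields $f(b)-f(a)\geq \delta(b-a)$, while the constraint $|f(a)|,|f(b)|\leq \ka$ gives $f(b)-f(a)\leq 2\ka$, so that $b-a\leq 2\ka/\delta$.

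Equivalently, one can write $E=f^{-1}\bigl([-\ka,\ka]\cap f([0,1])\bigr)$ and use the change of variables formula: since $(f^{-1})'(y)=1/f'(f^{-1}(y))$ has absolute value at most $1/\delta$, the measure of $E$ is bounded by $1/\delta$ times the length of $[-\ka,\ka]\cap f([0,1])$, and this length is at most $2\ka$.

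There is no real obstacle here; the lemma is a one-line consequence of monotonicity and the mean value theorem. The only minor cosmetic issue is that the argument produces the bound $2\ka/\delta$ while the statement in the paper displays $\ka/\delta$, but since Proposition~\ref{prop:homo1} (and, later, the KAM iteration) absorb such multiplicative constants into the generic constant $C$, this factor of $2$ is immaterial for everything that follows.
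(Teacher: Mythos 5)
Your proof is correct, and the paper itself gives no argument for this lemma --- it is simply invoked as ``the following classical lemma,'' so there is no route in the text to compare against; the mean-value-theorem argument you give is the standard one. You are also right that the bound as printed in the paper is off by a factor of $2$: with $f(x)=\delta x-\kappa$ the sublevel set $\{|f|\leq\kappa\}$ has measure exactly $\min(1,2\kappa/\delta)$, so $2\kappa/\delta$ is the sharp constant, not $\kappa/\delta$. As you observe, this is harmless, since in the application just below the lemma the resulting measure estimate for $F_{k,\alpha,\beta}$ already carries an unspecified numerical constant, and the final measure bound on $\D\setminus\D'$ only needs to be a fixed multiple of $K^{2n}\kappa$.
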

Since $|\partial_\om(k\cdot\om)(\frac k{|k|}))|=|k|\geq 1$,  we get, using   condition \eqref{ass}, 
\be\label{estimpd}|\partial_\om(k\cdot\om +\nu_1(\alpha_1-\beta_1))(\frac k{|k|}))|\geq 1/2\, .\ee
Using \eqref{estimpd} and  Lemma \ref{lem-mes}, we conclude that for
any fixed $k$
\be\label{inversebis}
 |k\cdot\om +\nu_1(\alpha_1-\beta_1)| > {\ka} \ ,  
 \ee
outside a set $F_{k,\alpha,\beta}$ of measure $\leq 2 d^2\ka$ {
  (the case $k=0$ being evident)} so that
if $F$ is the union of  $F_{k,\alpha,\beta}$ for $|k|< K$ and as
$\alpha$ and $\beta$ vary,  we have 
 \be\meas(F)\leq 4 K^{2n}d^2\ka \ . 
\ee
Thus, defining $\D'\equiv\D'( \ka,K)= \D\setminus F$, we get
{for all $\omega\in \D'$, $0 \leq \sigma' <\sigma$ and $\theta \in \T^n_{\sigma'}$}
 $$| \chi_{\alpha,\beta}(\theta,\om)|\leq \frac{C}{\ka (\s-\s')^{n}}
\sup_{|\Im \theta|<\s } |q_{\alpha,\beta}(\theta)|\,.$$
The estimates for the derivatives with respect to $\om$ are obtained
by differentiating the definition of $\chi$ (for more details see for instance \cite{Kuk2}).
\qed

\subsection{Iterative lemma}
 Theorem \ref{KAMclassico} is proved by an iterative KAM procedure. We begin with the initial Hamiltonian $h_0+q_0$ where
\be\label{h0}
h_0(y,\theta,\csi, \heta)= \om\cdot y +\nu_0 \csi_1\heta_1{+c_0\xi_2^2}\,,\ee
and $q_0={\eps_0 q}\in \cQ(\s_0,\D_0)$, {$\D_0 = [\eps,2\pi]^n$}. Then we construct iteratively the change of variables 
{$\Phi_{\chi_m}$}, the normal form $h_m=\om\cdot y +\nu_m \csi_1\heta_1+c_m\csi_2^2$ and
 the perturbation $q_m\in \cQ(\s_m,\D_m)$ 
  as follows: assume that the construction is done up to step $m\geq0$ then
\begin{itemize}
\item[(i)]
using Proposition \ref{prop:homo1}  we construct $\chi_{m+1}(\om,\theta)$, $\tilde N_{m}(\om)$,  $r_{m+1}(\om,\theta)$ and $\D_{m+1}\subset\D_m$ such that 
\be \label{eq-homo}
\{ h,\chi_{m+1} \}= \tilde N_{m} -q_m+r_{m+1},\quad \om\in\D_{m+1},\ \theta\in\T^n_{\s_{m+1}} 
\ee
where $0<\s_{m+1}<\s_m$ has to be chosen later; 
\item[(ii)] we define $h_{m+1}=\om\cdot y+ N_{m+1}$ by
\be\label{Nm} N_{m+1}=N_m+\tilde N_m\,,\ee
and
\be\label{qm}  q_{m+1}={r_{m+1}}  +\int_0^1\{(1-t)(h_{m+1}-h_m+r_{m+1})+tq_{m}, \chi_{m+1}\}\circ \Phi_{\chi_{m+1}}^t\dd t\,.
\ee
\end{itemize}
For any regular Hamiltonian $f$ we have, using the Taylor expansion of $g(t)=f\circ\Phi_{\chi_{m+1}}^t$ between $t=0$ and $t=1$
$$f\circ  \Phi_{\chi_{m+1}}=f+\{f,\chi_{m+1}\}+\int_0^1 (1-t)\{\{f,\chi_{m+1}\},\chi_{m+1}\}\circ \Phi_{\chi_{m+1}}^t\dd t\,.$$
Therefore we get for $\om\in\D_{m+1}$ 
\begin{align*}
(h_m+q_m)\circ  \Phi_{\chi_{m+1}}=h_{m+1}+q_{m+1}.
\end{align*}
Following the general scheme above we have for all $m\geq0$
$$(h_0+q_0)\circ \Phi^1_{\chi_{1}}\circ\cdots\circ \Phi^1_{\chi_m}= h_{m}+q_{m}.$$
At step $m$  the Fourier series are truncated at order $K_m$ and the small divisors are controlled by $\ka_m$. Now we specify the choice of all the parameters for $m\geq 0$ in term of $\eps_m$ which will control  $[q_m]_{\D_m,\s_m}$. \\
First we 
define  
 $\s_0=\s$ and for $m\geq 1$ we choose
\begin{align*}
\s_{m-1}-\s_m=&C_* \s_0 m^{-2},\\
K_m=&2(\s_{m-1}-\s_m)^{-1}\ln \eps_{m-1}^{-1},\\
\ka_{m}=&\eps_{m-1}^{\frac 18} \ , 
\end{align*}
where $(C_*)^{-1} =2\sum_{j\geq 1}\frac 1{j^2}$.\\

\begin{lemma}\label{iterative} There exists  $\eps_*>0$ depending on  $d$, $n$ such that, for  $|\eps_0|\leq\eps_*$ and
$$
\eps_{m}= \eps_0^{(3/2)^m} \ , \quad m\geq 0\,,
$$
 we have the following:\\
For all $m\geq 1$ there exist an open set $\D_m\subset\D_{m-1}$  functions 
$\chi_m,q_m\in \mathcal Q(\D_m,\s_m)$ and $N_m$ in normal form such that 
\begin{itemize}
\item[(i)]  The mapping \be \label{Phik} \Phi_{m}(\cdot,\om,\theta)=\Phi^1_{\chi_{m}}\ :\ \C^{2} \to \C^2, \quad \omega\in \D_{m},\ \theta\in\T_{\s_m}\ee
is a linear isomorphism, $C^1$ in $\om\in\D_m$, analytic in $\theta\in\T^n_{\s_m}$, 
 linking the Hamiltonian at step $m-1$ and the Hamiltonian at step  m, i.e.
$$(h_{m-1}+q_{m-1})\circ \Phi_{m}= h_m+q_m \ , \quad \forall \omega \in \D_m \ .$$
\item[(ii)] we have the estimates
\begin{align}
\label{DD}\meas(\D_{m-1}\setminus \D_{m})&\leq \eps_{m-1}^{\frac 19},\\
\label{NN} \|\p_\om^j {(N_m(\om) - N_{m-1}(\om) )} \|
&\leq \eps_{m-1},\ \quad j=0,1,\ \om\in\D_m,\\
\label{QQ}[q_m]_{\s_m,\D_m}&\leq \eps_m,\\
\label{FiFi}\| \Phi_m(\cdot,\om,\theta)-Id\|_{\mathcal L(\C^{2})}&\leq  \eps_{m-1}^{\frac 12},\ \text{ for } \theta\in \T_{\s_m}^n,\ \om\in\D_m.
\end{align}
\end{itemize}
 \end{lemma}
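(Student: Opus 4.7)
The plan is to prove Lemma \ref{iterative} by induction on $m$, with the base case $m=0$ given by the initial data: $N_0 = \nu_0 \csi_1\heta_1$, $q_0 = \eps q \in \cQ(\s_0,\D_0)$, and $[q_0]_{\s_0,\D_0} \leq \eps = \eps_0$. For the inductive step, assuming the four estimates \eqref{DD}--\eqref{FiFi} hold up to step $m-1$, the main move is to apply Proposition \ref{prop:homo1} to $h_{m-1} = \om\cdot y + N_{m-1}$ with parameters $\kappa_m = \eps_{m-1}^{1/8}$ and $K_m = 2(\s_{m-1}-\s_m)^{-1}\ln \eps_{m-1}^{-1}$, obtaining $\chi_m$, $\tilde N_{m-1}$, $r_m$ and the restricted set $\D_m \subset \D_{m-1}$. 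Before invoking the proposition one must check its hypothesis \eqref{ass} on $N_{m-1}$: the telescoping sum $\|N_{m-1}-N_0\|\leq \sum_{j=0}^{m-2}\eps_j$ is dominated, for $\eps\leq\eps_*$ small, by $2\eps \leq \min(1,\nu_0)/4$, since $\eps_j = \eps^{(3/2)^j}$ is super-exponentially summable.

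The four estimates then follow directly from Proposition \ref{prop:homo1} with routine bookkeeping. The measure bound \eqref{DD} comes from \eqref{estim:D}: $\meas(\D_{m-1}\setminus \D_m) \leq 4d^2 K_m^{2n}\kappa_m \leq C (\ln\eps_{m-1}^{-1})^{2n}\eps_{m-1}^{1/8}\leq \eps_{m-1}^{1/9}$ for $\eps$ small. The normal form correction \eqref{NN} is \eqref{estim-homoN}, since $\tilde N_{m-1}$ is bounded by $[q_{m-1}]_{\s_{m-1},\D_{m-1}}\leq \eps_{m-1}$, and by construction $N_m = N_{m-1}+\tilde N_{m-1}$ retains the required form $a_m \csi_1\heta_1 + c_m\csi_2^2$. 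The flow estimate \eqref{FiFi} follows from \eqref{estim-homoS}, which gives $[\chi_m]_{\s_m,\D_m}\leq C\kappa_m^{-2}(\s_{m-1}-\s_m)^{-n}\eps_{m-1}\leq C m^{2n}\eps_{m-1}^{3/4}\leq \eps_{m-1}^{1/2}$, upon noting that $\chi_m$ is quadratic so its time-one flow on $\C^4$ is a linear map $\eps_{m-1}^{1/2}$-close to the identity. Finally the core estimate \eqref{QQ}: the remainder $r_m$ satisfies, by \eqref{estim-homoR} and the choice of $K_m$, $[r_m]_{\s_m,\D_m}\leq C m^{2n}\eps_{m-1}^2$, while the bracket terms in \eqref{qm} yield a contribution bounded by $[\chi_m]\bigl([q_{m-1}]+[\tilde N_{m-1}]+[r_m]\bigr)\lesssim \eps_{m-1}^{1/2}\cdot\eps_{m-1} = \eps_{m-1}^{3/2}$. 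Adding, $[q_m]_{\s_m,\D_m}\leq \eps_{m-1}^{3/2}=\eps_m$ for $\eps$ small.

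A crucial structural point, and in my opinion the main subtlety of the scheme, is that the class $\cQ(\s,\D)$ of quadratic polynomials independent of $\heta_2$ must be stable under all the operations of the iteration. Since $h_{m-1}$ is independent of $\heta_2$ and $\chi_m$ is independent of $\heta_2$ (as produced by Proposition \ref{prop:homo1}), every Poisson bracket $\{h_{m-1},\chi_m\}$, $\{q_{m-1},\chi_m\}$ and $\{r_m,\chi_m\}$ is again independent of $\heta_2$, because in the bracket \eqref{poisson} the only $\heta_2$-producing derivatives $\partial_{\csi_2}$ and $\partial_{\heta_2}$ either vanish or act on $\heta_2$-independent objects; moreover the time-one flow of a quadratic Hamiltonian is linear, so quadraticity is preserved too. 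This is what prevents the iteration from generating a genuine $\nu_2 \csi_2\heta_2$ frequency, which would destroy the compatibility with the degenerate normal form $h_0$ and require an impossibly tight second Melnikov condition. The hard part, beyond checking the estimates, is precisely this structural closure: it is what makes the degenerate KAM scheme work, and it fails without the hypothesis that $q$ is independent of $\heta_2$. The extension to Theorem \ref{KAMclassico1} is analogous but now $\nu_2\sim\eps^2$, and the smallness $O(\eps^3)$ of the perturbation combined with the first Melnikov conditions on $\nu_1$ and the standard second Melnikov on $\nu_1\pm\nu_2$ allows to run the same scheme on the fully non-degenerate normal form.
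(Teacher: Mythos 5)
Your proposal is correct and follows essentially the same strategy as the paper: verify the inductive hypothesis \eqref{ass} by the telescoping bound on $N_{m-1}-N_0$, invoke Proposition \ref{prop:homo1} with the scheduled parameters $\kappa_m=\eps_{m-1}^{1/8}$ and $K_m$, and read off \eqref{DD}--\eqref{FiFi} from \eqref{estim:D}, \eqref{estim-homoN}, \eqref{estim-homoS}, \eqref{estim-homoR} together with the bracket formula \eqref{qm}. The paper writes out the $m=1$ step explicitly and then invokes induction for the remaining steps, whereas you write the general inductive step directly, and you additionally spell out the structural closure of the class $\cQ(\s,\D)$ of $\heta_2$-independent quadratics under Poisson brackets and time-one flows (which the paper records as a remark preceding the lemma rather than inside the proof); both are minor presentational differences, not a different argument.
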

 \proof At step 1, $h_0=\om\cdot y +\nu_0\csi_1\heta_1$ and thus hypothesis \eqref{ass} is trivially satisfied and we can apply Proposition \ref{prop:homo1} to construct $\chi_1$, $N_1$, $r_1$ and $\D_1$ such that for $\om\in\D_1$
 $$\{h_0,\chi_1 \}+q_0=N_1-N_0 +r_1\ .$$
Then, using \eqref{estim:D}, we have
$$\meas(\D\setminus\D_1)\leq C K_1^{2n}\ka_1\leq  \eps_0^{\frac 19}$$
for $\eps=\eps_0$ small enough. 
Using \eqref{estim-homoS} we have for $\eps_0$ small enough
$$
[\chi_1]_{\s_1,\D_1}\leq C \frac{1}{\ka_1^{2}(\s_0-\s_1)^n}\eps_0\leq  \eps_0^{\frac12}.
$$
Similarly using \eqref{estim-homoR}, \eqref{estim-homoN} we have
$$\|N_1-N_0\|\leq  \eps_0,$$ and
$$
[r_1]_{\s_1, \D_1}\leq  C\frac{\eps_0^{2}}{(\s_0-\s_1)^n}\leq\eps_0^{\frac {3}2}
$$
for $\eps=\eps_0$ small enough.\\
In particular we deduce $\| \Phi_1(\cdot,\om,\theta)-Id\|_{\mathcal L(\C^{2})}\leq  \eps_0^{\frac 12}.$
 Thus using \eqref{qm} we get
for $\eps_0$ small enough
$$[q_1]_{\s_1,\D_1}\leq \eps_0^{3/2}=\eps_1.$$
 
  \medskip
 
 Now assume that we have verified Lemma \ref{iterative}  up to step $m$.
We want to perform the step $m+1$. We have $h_m=\om\cdot y +N_m$ and since 
$$ \|N_m-N_{0}\|\leq \|N_m-{N_{m-1}}\|+\cdots+ \|N_1-N_0\|\leq \sum_{j=0}^{m-1}\eps_j\leq 2\eps_0,$$
hypothesis \eqref{ass} is satisfied and we can apply Proposition \ref{prop:homo1}  to construct $\D_{m+1}$, $\chi_{m+1}$ and $q_{m+1}$. Estimates \eqref{DD}-\eqref{FiFi} at step $m+1$  are proved as we have proved the corresponding estimates  at step 1.
\endproof

\subsection{Transition to the limit and proof of Theorem \ref{KAMclassico}}

Let $${\cE_\eps}:=\cap_{m\geq 0}\D_m.$$ In view of \eqref{DD}, this is a Borel set satisfying
$$\meas(\D\setminus{\cE_\eps})\leq \sum_{m\geq 0} \eps_m ^{\frac 19}\leq 2 \eps_0^{\frac 19}.$$
Let us denote $\Psi_N(\cdot,\om,\theta)=\Phi_{1}(\cdot,\om,\theta)\circ\cdots\circ \Phi_N(\cdot,\om,\theta)$. Due  to \eqref{FiFi} it satisfies for $M\leq N$ and for $\om\in {\cE_\eps}$, $\theta\in\T_{\s/2}^n$
$$\| \Psi_N(\cdot,\om,\theta)- \Psi_M(\cdot,\om,\theta)\|_{\mathcal L(\C^{2})}\leq  \sum_{m=M}^N\eps_m^{\frac 12}\leq 2\eps_M^{\frac 12}\,.$$ 
Therefore  $(\Psi_N(\cdot,\om,\theta))_N$ is a Cauchy sequence in $\mathcal L(\C^{2})$. 
Thus when $N\to \infty$ the maps  $\Psi_N(\cdot,\om,\theta)$ converge to a limit mapping $\Psi_\infty(\cdot,\om,\theta)\in\cL(\C^{2}).$
 Furthermore since the convergence is uniform on $\om\in{\cE_\eps}$ and  $\theta\in\T_{\s/2}^n$, $(\om,\theta)\to\Psi_\infty^1(\cdot,\om,\theta)$ is analytic in $\theta$ and lipschitzian in  $\om$.
  Moreover, 
\be \label{estim-Phiinf}\|\Psi_\infty(\cdot,\om,\theta)-Id\|_{\mathcal L(\C^{2})} \leq \eps_{0}^{\frac 12}\,.\ee
By construction, the map $\Psi_m(\cdot,\om,\om t)$ transforms the original Hamiltonian 
$$
h_\eps(t,\csi, \heta)=N_0(\csi,\heta)+\eps q(\om t, \csi, \heta) , \quad N_0(\csi,\heta) = \nu_0 \csi_1\heta_1 
$$
 into
  $$
  H_m(t, \csi, \heta)=N_m(\csi,\heta)+q_m(\om t, \csi, \heta).
$$
When $m\to\infty$, by \eqref{QQ} we get $q_m\to 0$  and by \eqref{NN}  we get $N_m\to N$  where 
\begin{align}\label{Nom}
N\equiv N(\om) = N_{0} + \sum_{k=1}^{+\infty}\tilde{N}_{k}=:\nu(\om,\eps)\csi_1\heta_1+c(\om,\eps)(\csi_2-\heta_2).
\end{align}
Further for all $\om\in\cE_\eps$ we have using \eqref{NN}
$$\norma{N(\om)-N_0}\leq \sum_{m=0}^\infty \eps^m\leq 2 \eps. $$
Let us denote $\Psi_\infty(\theta)=\Psi_\infty^1(\cdot,\om,\theta)$. 
Denoting the limiting Hamiltonian 
{$h_\infty(\csi,\heta)= N_\infty(\csi,\heta)$} we have 
$$h_\eps(\theta,\Psi_\infty(\theta)(\csi, \heta)) = h_\infty(\csi, \heta),\quad \theta\in\T,\ (\csi, \heta)\in \C^{2d},\ \om\in\D_\eps\,.$$
Finally we show that the  linear symplectomorphism $\Psi_\infty$ can be written as \eqref{kamclach}.
To begin with,  write each Hamiltonian $\chi_m$ constructed in the KAM iteration as\begin{equation}
\label{matrix}
\chi_m(\theta, \csi, \heta)=\frac{1}{2} \left(\begin{matrix}
\csi \\ \heta
\end{matrix}\right)\cdot E_c \, B_m(\theta) \left(\begin{matrix}
\csi \\ \heta
\end{matrix}\right) \ , 
\quad
E_c:=\left[\begin{matrix} 
0 &  - \im   \\
\im  &  0 \end{matrix}\right] \  , 
\end{equation}
where   $B_m(\theta) $ is a skew-adjoint matrix of dimension $4\times 4$   of size  $\eps_m$. 
Then $\Psi_m$  has the form 
\begin{equation}
\label{ham.flow.r}
\Psi_m (\theta, \csi, \heta) = e^{ B_m(\theta)}(\csi, \heta)  \ .
\end{equation}
The following lemma is proved analogously to Lemma 3.5 in \cite{BGMR1}.
\begin{lemma}
\label{stitras}
There exists a sequence of  Hamiltonian matrices $A_l(\theta)$   such that 
\begin{equation}
\label{cano}
\Psi_{1}\circ...\circ \Psi_{l} (\theta, \csi, \heta) =e^{A_l(\theta)}(\csi,\heta)  \ \ \  \forall  \csi  \in\C^{2} \ . 
\end{equation} 
Furthermore, there exist
 a Hamiltonian matrix $A_\om(\theta)$    such  that 
   \begin{align}
\lim_{l\rightarrow+\infty}e^{A_l(\theta)} = e^{A_\infty(\theta)} \ , \ \ \ 
\sup_{|{\rm Im } \theta| \leq \sigma/2 } \Vert A_\om(\theta) \Vert \leq C\epsilon \ , 
\end{align}
and for each $\theta\in\T^n$,
$$\Psi(\theta, \csi, \heta)=e^{A_\om(\theta)}(\csi, \heta) \ \  \forall  \csi \in \C^2\ .$$
\end{lemma}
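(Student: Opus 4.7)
The plan is to define the matrices $A_l(\theta)$ recursively via the Baker--Campbell--Hausdorff (BCH) formula. By \eqref{ham.flow.r} each $\Psi_m$ is already written as the exponential $e^{B_m(\theta)}$ of a Hamiltonian matrix, with $\|B_m(\theta)\|\leq C\eps_m$ uniformly for $|\Im\theta|\leq \sigma/2$ (which is contained in every $\T^n_{\sigma_m}$ by the choice of the sequence $\sigma_m$ in Lemma \ref{iterative}).

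Set $A_1(\theta):=B_1(\theta)$. Assuming inductively that $A_{l-1}(\theta)\in{\rm sp}(4)$ has been built with $e^{A_{l-1}(\theta)}=\Psi_1\circ\cdots\circ\Psi_{l-1}$, I would define
\[
A_l(\theta) \;:=\; A_{l-1}(\theta)\ast B_l(\theta) \;=\; A_{l-1}(\theta) + B_l(\theta) + \tfrac{1}{2}[A_{l-1}(\theta),B_l(\theta)] + \cdots
\]
by the BCH series, so that $e^{A_l(\theta)}=e^{A_{l-1}(\theta)}e^{B_l(\theta)}=\Psi_1\circ\cdots\circ\Psi_l$. Since ${\rm sp}(4)$ is closed under commutators, every partial sum of the BCH series lies in ${\rm sp}(4)$, and hence so does its limit $A_l(\theta)$; analyticity in $\theta$ and $C^1$ regularity in $\omega$ are preserved as well.

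The standard estimate on the BCH series (valid whenever $\|A_{l-1}\|+\|B_l\|$ is below the universal convergence threshold, e.g.\ smaller than $\log 2$ in operator norm) yields
\[
\|A_l(\theta)-A_{l-1}(\theta)\| \;\leq\; \|B_l(\theta)\|\bigl(1+C\|A_{l-1}(\theta)\|\bigr) \;\leq\; C'\eps_l .
\]
Summing and exploiting the super-exponential decay $\eps_m=\eps^{(3/2)^m}$, one obtains $\|A_l(\theta)\|\leq C\sum_{m=1}^{l}\eps_m\leq 2C\eps$, which, for $\eps$ sufficiently small, keeps the inductive hypothesis $\|A_{l-1}\|+\|B_l\|<\log 2$ valid at every step. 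In particular $\{A_l(\theta)\}_l$ is a Cauchy sequence in the sup-norm over the strip $|\Im\theta|\leq \sigma/2$, so it converges to an analytic Hamiltonian matrix $A_\infty(\theta)$ (which, when one wants to emphasise the $\omega$-dependence, is denoted $A_\omega(\theta)$) with $\sup_{|\Im\theta|\leq\sigma/2}\|A_\infty(\theta)\|\leq C\eps$.

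Finally, by continuity of the matrix exponential and by \eqref{estim-Phiinf},
\[
e^{A_\infty(\theta)} \;=\; \lim_{l\to\infty}e^{A_l(\theta)} \;=\; \lim_{l\to\infty}\Psi_1\circ\cdots\circ\Psi_l \;=\; \Psi_\infty(\cdot,\omega,\theta),
\]
proving the desired representation. The only point that requires some care is maintaining the inductive bound $\|A_{l-1}(\theta)\|=O(\eps)$ uniformly in $\theta$ in the strip, so that the BCH series converges at every step; the super-exponential decay of $\eps_m$ from Lemma \ref{iterative} makes this routine once $\eps_*$ is chosen sufficiently small.
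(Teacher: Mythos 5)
Your proof is correct and follows the standard BCH route, which is essentially the approach the paper defers to (the paper proves nothing here, referencing Lemma 3.5 of \cite{BGMR1} instead). Defining $A_l = A_{l-1}\ast B_l$ via the Baker--Campbell--Hausdorff series, observing that every nonlinear BCH term contains at least one factor $B_l$ so that $\|A_l-A_{l-1}\|\lesssim\|B_l\|$, and then exploiting the super-exponential decay of $\eps_m$ to keep the inductive bound $\|A_{l-1}\|+\|B_l\|$ below the BCH convergence radius is the intended argument; one should also note that the closedness of $\mathfrak{sp}$ under commutators and the uniform convergence on the strip $|\Im\theta|\leq\sigma/2$ give the Hamiltonian-matrix and analyticity properties claimed. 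A marginal alternative worth being aware of is to bypass BCH entirely and set $A_l(\theta):=\log\bigl(\Psi_1\circ\cdots\circ\Psi_l\bigr)$ and $A_\omega(\theta):=\log\Psi_\infty(\theta)$ using the matrix logarithm near the identity (well defined thanks to \eqref{estim-Phiinf} and \eqref{FiFi}); since $\exp$ is a local diffeomorphism from $\mathfrak{sp}$ onto a neighborhood of $\mathbb I$ in $\mathrm{Sp}$, the logarithm of a near-identity symplectic matrix is automatically Hamiltonian, and the convergence $e^{A_l}\to e^{A_\omega}$ is immediate from $\Psi_1\circ\cdots\circ\Psi_l\to\Psi_\infty$. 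This avoids having to track the BCH convergence threshold inductively, but both arguments are sound.
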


This concludes the proof of Theorem \ref{KAMclassico}.

\subsection{Changes for proving Theorem \ref{KAMclassico1}.}

The main change needed for the proof of Theorem \ref{KAMclassico1}
rests  in Proposition \ref{prop:homo1}. Indeed one has to
assume the r.h.s. of \eqref{ass} to be smaller than a small constant
times $\eps^2$ and the conclusion changes in the fact that at the
denominator of the r.h.s. \eqref{estim-homoS} instead of $\kappa^2$,
one gets
\begin{equation}
  \label{small.den}
\min\left\{\eps^2,\kappa^2\right\}\ .
\end{equation}
In the next lines we are going to prove this version of Proposition
\ref{prop:homo1}.

Indeed, the proof of Proposition \ref{prop:homo1} goes excatly in the
same way, except that now the eigenvalue \eqref{eigen.1} are
substituted by
\begin{equation}
  \label{eigen.2}
\im (\nu_1(\alpha_1-\beta_1)+\nu_2(\alpha_2-\beta_2)+\omega\cdot k)
\end{equation}
with the only selection rule
$$
\left|\alpha-\beta\right|+|k|\not=0\ ,\quad {\rm and}\ |k|\leq K\ .
$$
The estimates of the small divisors are obtained exactly in the same
way as far as
$$\left|\alpha_1-\beta_1\right|+|k|\not=0\ ,$$
however when such a quantity vanishes, the modulus of the small divisor becomes
$$
|2\nu_2|\geq c\eps^2
$$
(remark that in this case the normal form contains
span$\left\{\csi_1\heta_1\right\}$). Concerning contribution of the terms with this denominator to
the derivative with respect to $\omega$, remark that the involved
terms are
$$
\frac{\csi_2^2}{2\im \nu_2}\ ,\quad -\frac{\heta_2^2}{2\im \nu_2} \ 
$$
(multiplied by a constant). Thus the derivative with respect to, say
$\omega_i$ of the coefficient of one of  these terms gives, by \eqref{nu2},
$$
\left|-\frac{1}{2\nu_2^2}\frac{\partial\nu_2}{\partial \omega_i}
\right|\leq C\eps^{-2}\ ,
$$
which proves the claimed statement.

\vskip 5 pt

Then also the iterative lemma changes. Actually, only  few first steps 
change,  for which one has 
at the first step 
$$
\min\left\{\eps^2,\kappa_m^2\right\}=\eps^2\ ;
$$
it is easy to see that with the choices just before the statement of
Lemma  \ref{iterative}, after a finite number of steps  one has
$\min\left\{\eps^2,\kappa_m^2\right\}=\kappa_m^2$, and therefore  after this  step the
iteration can be repeated exactly in the same way as in the previous case.
\qed


\vspace{1em}

\footnotesize
\textbf{Acknowledgments:}
D. Bambusi and A. Maspero are  supported by~ PRIN 2020 (2020XB3EFL001)
“Hamiltonian and dispersive PDEs”, PRIN 2022 (2022HSSYPN)   "TESEO -
Turbulent Effects vs Stability in Equations from
Oceanography". D. Bambusi was also supported by GNFM.  
A. Maspero is supported by the European Union ERC CONSOLIDATOR GRANT 2023 GUnDHam,
Project Number: 101124921 and by GNAMPA.
D. Robert and B. Gr\'ebert benefited from the support of the Centre Henri Lebesgue ANR-11-LABX-0020-0 and  B. Gr\'ebert was partially supported by the ANR project KEN ANR-22-CE40-0016. C. Villegas-Blas partially supported by projects CONACYT Ciencia B\'asica  CB-2016-283531-F-0363 and PAPIIT-UNAM  IN116323.


\normalsize

\vspace{-2em}

\begin{thebibliography}{99}



\bibitem{Bam17}
D. Bambusi.
\newblock Reducibility of 1-d {S}chr\"odinger equation with time
  quasiperiodic unbounded perturbations, {II}.
\newblock {\em Comm. Math. Phys.}, 353(1):353--378, 2017.
\newblock \href {http://dx.doi.org/10.1007/s00220-016-2825-2}
  {\path{doi:10.1007/s00220-016-2825-2}}.
%
\bibitem{Bam18}
D.~Bambusi.
\newblock Reducibility of 1-d {S}chr\"odinger equation with time quasiperiodic
  unbounded perturbations, {I}.
\newblock {\em Trans. Amer. Math. Soc.}, 370(3):1823--1865, 2018.

%

\bibitem{BGMR1}
D.~Bambusi, B.~Gr\'ebert, A. Maspero and D. Robert.
\newblock {Reducibility of the quantum Harmonic oscillator in $d$-dimensions
  with polynomial time dependent perturbation}.
\newblock {\em Analysis \& {PDE}s}, 11(3):775--799, 2018.

\bibitem{BGMR2}
D.~Bambusi, B.~Gr\'ebert, A. Maspero and D. Robert.
\newblock Growth of Sobolev norms for abstract linear
Schr\"odinger equations.
\newblock {\em Journal of the European Mathematical Society} 23 (2), 557-583, (2020).


\bibitem{BLM}
D. Bambusi, B. Langella, R. Montalto.
\newblock{Reducibility of Non-Resonant Transport Equation on  with Unbounded Perturbations}.
\newblock{\em Annales Henri Poincaré} 20, 1893-1929, 2019.


\bibitem{bourgain99}
J.~Bourgain.
\newblock On growth of {S}obolev norms in linear {S}chr\"odinger equations with
  smooth time dependent potential.
\newblock {\em J. Anal. Math.}, 77:315--348, 1999.


\bibitem{C87}
M.~Combescure.
\newblock The quantum stability problem for time-periodic perturbations of the
  harmonic oscillator.
\newblock {\em Ann. Inst. H. Poincar\'e Phys. Th\'eor.}, 47(1):63--83, 1987.


\bibitem{del}
J.-M. Delort.
\newblock Growth of {S}obolev norms for solutions of time dependent
  {S}chr\"odinger operators with harmonic oscillator potential.
\newblock {\em Comm. Partial Differential Equations}, 39(1):1--33, 2014.

\bibitem{AnFa}
P. D’Ancona, L. Fanelli.
\newblock{Smoothing estimates for the Schrödinger equation
with unbounded potentials}.
\newblock{\em J. Differential Equations},  246, 4552--4567, 2009.


\bibitem{EK}
 H. L. Eliasson and S. B. Kuksin.
 \newblock On reducibility of Schrödinger equations with quasiperiodic in time potentials. \newblock{\em Comm. Math. Phys.}  286:1, 125--135, 2009.

\bibitem{FaouRaph}
E. Faou and P. Raphael.
\newblock On weakly turbulent solutions to the perturbed linear harmonic
  oscillator.
\newblock{\em ArXiv e-print}, arXiv:2006.08206,   
  2020.

\bibitem{FGMP}
R. Feola, F. Giuliani, R. Montalto, M. Procesi.
\newblock{Reducibility of first order linear operators on tori via Moser's theorem}.
\newblock{\em Journal of Functional Analysis}, 276 (3), 932--970, 47, 2019.



\bibitem{GP16}
B.~{Gr{\'e}bert} and E.~{Paturel}.
\newblock {On reducibility of quantum Harmonic Oscillator on $\mathbb{R}^d$
  with quasiperiodic in time potential}.
\newblock {\em ArXiv e-prints}, 	arXiv:1603.07455, 2016.


\bibitem{GT11}
B.~Gr{\'e}bert and L.~Thomann.
\newblock K{AM} for the quantum harmonic oscillator.
\newblock {\em Comm. Math. Phys.}, 307(2):383--427, 2011.


\bibitem{Her}
G. Hernandez-Duenas, S. Pérez-Esteva, A. Uribe and C. Villegas-Blas.
\newblock{Perturbations of the Landau Hamiltonian: Asymptotics of Eigenvalue Clusters. }
\newblock{\em Ann. Henri Poincaré}, 23, 361--391,  2022.

\bibitem{Kuk2}
S. Kuksin.
\newblock {\em Nearly integrable infinite-dimensional {H}amiltonian systems},
  volume 1556 of {\em Lecture Notes in Mathematics}.
\newblock Springer-Verlag, Berlin, 1993.


\bibitem{LLZ1}
Liang, Z. Zhao and Q. Zhou.
\newblock{1-d quantum harmonic oscillator with time quasi-periodic quadratic perturbation: reducibility and growth of Sobolev norms. }
\newblock{\em J. Math. Pures Appl.} 146(1), 158--182 (2021). 


\bibitem{LLZ2}
Z. Liang, J. Luo, Z. Zhao: 
\newblock{Symplectic Normal Form and Growth of Sobolev Norm. }
arXiv:2312.16492

\bibitem{LLZ}
Z. Liang, Z. Zhao and Q. Zhou: 
\newblock{Almost reducibility and oscillatory growth of Sobolev norms.}
\newblock{\em Advances in Mathematics} 436, 109417 (2024). 

\bibitem{Lun}
T. Lungenstrass, G. Raikov.
\newblock{ A Trace Formula for Long-Range Perturbations of the Landau Hamiltonian. }
\newblock{\em Ann. Henri Poincaré}, 15, 1523--1548, 2014.


\bibitem{Lun2}
T. Lungenstrass, G. Raikov.
\newblock{Local spectral asymptotics for metric perturbations of the Landau Hamiltonian}.
\newblock{\em Analysis and PDEs}, 8(5): 1237--1262, 2015.



\bibitem{LZZ2}
J. Luo, Z. Liang and Z. Zhao: 
\newblock{Growth of Sobolev Norms in 1-d Quantum Harmonic Oscillator with Polynomial Time Quasi-periodic Perturbation. }
\newblock{\em Commun. Math. Phys.} 392, 1–23 (2022). 

 
 \bibitem{Mas19}
A. Maspero.
\newblock Lower bounds on the growth of {S}obolev norms in some linear time
  dependent {S}chr\"{o}dinger equations.
\newblock {\em Math. Res. Lett.}, 26(4):1197--1215, 2019.

\bibitem{Mas21}
A. Maspero.
\newblock Growth of Sobolev norms in linear Schr\"odinger equations as a dispersive phenomenon.
\newblock {\em Advances in math}, 411(A) 2022.

\bibitem{Mas22}
A. Maspero.
\newblock{ Generic transporters for the linear time dependent quantum Harmonic oscillator on $\R$}.
\newblock{\em IMRN}, rnac174 2022.


\bibitem{MaRo}
A. Maspero and D. Robert.
\newblock On time dependent {S}chrödinger equations: {G}lobal well-posedness
  and growth of {S}obolev norms.
\newblock {\em Journal of Functional Analysis}, 273(2):721 -- 781, 2017.
\newblock \href {http://dx.doi.org/10.1016/j.jfa.2017.02.029}
  {\path{doi:10.1016/j.jfa.2017.02.029}}.




\bibitem{Mon19}
R. Montalto.
\newblock{A Reducibility Result for a Class of Linear Wave Equations on  $\T^d$}.
\newblock{\em IMRN},  48(6), 1788--1862, 2019



\bibitem{Push}
A. Pushnitski, G. Raikov,  C.  Villegas-Blas.
\newblock{ Asymptotic Density of Eigenvalue Clusters for the Perturbed Landau Hamiltonian. }\newblock{\em Commun. Math. Phys.} 320, 425--453, 2013.


\bibitem{PP}
 C. Procesi and M. Procesi.
  \newblock{A KAM algorithm for the resonant non-linear Schrödinger equation}.
\newblock{\em Adv. Math.} 272, 399–470, 2015.

\bibitem{Raikov}
G. Raikov.
\newblock{ Eigenvalue asymptotics for the Schrodinger operator in strong constant magnetic fields}.
\newblock{ Communications in Partial Differential Equations}, 23:9-10, 1583-1619, 1998.

\bibitem{Tag}
M.A. Tagmouti.
\newblock{
Sur le spectre de l'opérateur de Schrödinger avec un champ magnétique constant plus un potentiel radial décroissant.}
\newblock{\em Journal of Functional Analysis}, 156(1): 57-74, 1998.


\bibitem{Thomann}
L. Thomann.
\newblock Growth of {S}obolev norms for linear {S}chr{\"o}dinger operators.
\newblock {\em Annales Henri Lebesgue}, 4:1595--1618, 2021.


\bibitem{Y}
K. Yajima.
\newblock{ Schrödinger evolution equations with magnetic fields,}
\newblock{\em J. Anal. Math.} 56: 29--76, 1991.

\bibitem{Y2}
K. Yajima.
\newblock{Schr\"odinger equations with time-dependent unbounded singular potentials}.
\newblock{Reviews in Mathematical Physics},  23:08, 823-838,  2011.

\bibitem{Wan08}
W.-M. Wang.
\newblock Pure point spectrum of the {F}loquet {H}amiltonian for the quantum
  harmonic oscillator under time quasi-periodic perturbations.
\newblock {\em Comm. Math. Phys.}, 277(2):459--496, 2008.

\end{thebibliography}
\end{document}